\newcommand{\udots}{\mathinner{\mskip1mu\raise1pt\vbox{\kern7pt\hbox{.}}
\mskip2mu\raise4pt\hbox{.}\mskip2mu\raise7pt\hbox{.}\mskip1mu}}
\newcommand{\SD}{{\mathcal{D}}}
\newcommand{\SM}{{\mathcal{M}}}
\newcommand{\SO}{{\mathcal{O}}}
\newcommand{\Gr}{\operatorname{Gr}}
\newcommand{\Spec}{\operatorname{Spec}}
\newcommand{\Sym}{\operatorname{Sym}}
\newcommand{\id}{\operatorname{Id}}
\newcommand{\End}{\operatorname{End}}
\newcommand{\tr}{\operatorname{tr}}
\newcommand{\ad}{\operatorname{ad}}
\newcommand{\smb}{\operatorname{sb}}
\newcommand{\op}{\operatorname}
\newtheorem{proposition}{Proposition}[section]
\newtheorem{theorem}[proposition]{Theorem}
\newtheorem{definition}[proposition]{Definition}
\newtheorem{lemma}[proposition]{Lemma}
\newtheorem{corollary}[proposition]{Corollary}
\newtheorem{remark}[proposition]{Remark}
\numberwithin{equation}{section}
\title[Hitchin map for $\Lambda$-modules positive characteristic]{Hitchin map for the moduli space of $\Lambda$-modules in positive characteristic}
\author[D. Alfaya]{David Alfaya}
\author[C. Pauly]{Christian Pauly}
\date{}
\address{Department of Applied Mathematics and Institute for Research in Technology, ICAI School of Engineering,
Comillas Pontifical University, C/Alberto Aguilera 25, 28015 Madrid, Spain}
\email{dalfaya@comillas.edu}
\address{Laboratoire J.-A. Dieudonn\'e, Universit\'e Côte d'Azur,
Parc Valrose, 06108 Nice Cedex 02, France}
\email{pauly@unice.fr}
\keywords{Hitchin map, Lambda-modules, connections, Higgs bundles, positive characteristic, moduli space}
\subjclass[2010]{14D20, 14G17}
\begin{document}

\begin{abstract}
Building on Simpson's original definition over the complex numbers, we introduce the notion of restricted sheaf $\Lambda$ of 
rings of differential operators on a variety defined over a field of positive characteristic. We define the notion of $p$-curvature for
$\Lambda$-modules and the analogue of the Hitchin map on the moduli space of $\Lambda$-modules. We show that under certain conditions
this Hitchin map descends under the Frobenius map of the underlying variety and we give examples.
\end{abstract}

\maketitle

\section{Introduction}

The notion of sheaf of rings of differential operators $\Lambda$ over a smooth variety $X$ defined over an algebraically closed field 
$\mathbb{K}$ and the associated notion of $\Lambda$-module for $\SO_X$-modules over $X$ was introduced in \cite{Simpson1} over the complex
numbers $\mathbb{K} = \mathbb{C}$ as a way to give a unifying structure for $\mathcal{D}_X$-modules, i.e. vector bundles with an integrable 
connection, and Higgs sheaves over $X$. Other examples of $\Lambda$-modules include connections along a foliation or logarithmic connections.

In this paper we consider Simpson's original definition of sheaf of rings of differential operators 
$\Lambda$ over a field $\mathbb{K}$ of characteristic $p > 0$. Note that the sheaf
of rings of crystalline differential operators $\mathcal{D}_X$ (see \cite{BO} or \cite{BMR}) defined as the enveloping algebra of the Lie algebroid
$T_X$ is such a sheaf of rings of differential operators, but the usual sheaf of differential operators 
(e.g. \cite[Section 16]{EGA4}) is not. One of the main
features of the sheaf of rings $\mathcal{D}_X$ in positive characteristic is its large center, which can be described by using the $p$-th
power map, or $p$-structure, on the Lie algebroid $T_X$. Our first contribution to the general study of $\Lambda$-modules in positive characteristic 
is the definition of {\em restricted} sheaf of rings of differential operators (see Definition \ref{defrestrictedlambda})  
obtained by equipping $\Lambda$ with a $p$-structure.  Examples of restricted sheaves of rings of differential
operators already appeared in \cite{Langer} as universal enveloping algebras of restricted Lie algebroids. 
New non-split examples are given, for instance, by the sheaf of rings of twisted differential operators $\mathcal{D}_X(L)$ for some line
bundle $L$ over $X$ (see Subsection \ref{twisteddiffop}).

The main purpose of this paper is to prove a property of the analogue of the Hitchin map for restricted
$\Lambda$-modules in positive characteristic over a projective variety $X$. 
First, we check (Section 5) that the notion of $p$-curvature 
$\psi_\nabla$ of a $\Lambda$-module $E$ over $X$ adapts to our general set-up and thus defines for each
$\Lambda$-module structure on the sheaf $E$
a $F^* H^\vee$-valued Higgs field on $E$, where $H$ is the first quotient $\Lambda_1/ \Lambda_0$
associated to the filtration $\Lambda_0 \subset \Lambda_1 \subset \cdots \subset \Lambda$ and $F$
is the Frobenius map of $X$. Thus by applying the classical Hitchin map to the Higgs field $\psi_\nabla$
we obtain a morphism
$$ h_\Lambda : \mathcal{M}_X^\Lambda(r, P) \rightarrow \mathcal{A}_r(X, F^* H^\vee),$$
where  $\mathcal{M}_X^\Lambda(r, P)$ is the moduli space parameterizing Giesecker semi-stable 
$\Lambda$-modules over $X$ of rank $r$ and with Hilbert polynomial $P$, and $\mathcal{A}_r(X, F^* H^\vee)$
is the Hitchin base for the vector bundle $F^*H^\vee$. Under the assumption that the anchor map
$\delta : \Lambda_1/ \Lambda_0 \rightarrow T_X$ induced by the commutator between elements of $\Lambda_1$ and local regular functions in $\SO_X$ is generically surjective, our main result (Theorem 
\ref{thm:HitchinDescent}) says that the coefficients of the characteristic polynomial of $\psi_\nabla$
descend under the Frobenius map $F$ of the variety $X$. Equivalently, this means that the Hitchin
morphism $h_\Lambda$ factorizes through
\begin{eqnarray} \label{Hitchinmapfact}
h'_\Lambda : \mathcal{M}_X^\Lambda(r, P) \rightarrow \mathcal{A}_r(X, H^\vee),
\end{eqnarray}
followed by the pull-back under the Frobenius map $F$ of global sections. The latter theorem was first proved 
in \cite{LP01} for a smooth projective curve $X$ and for $\Lambda = \mathcal{D}_X$. It was observed in \cite[Section 2.5]{EG} 
that in the case $\Lambda = \mathcal{D}_X$ the proof follows rather directly from the fact that the $p$-curvature
$\psi_\nabla$ is flat for the natural connection on the sheaf $\mathrm{End}(E) \otimes F^* \Omega^1_X$, already proved in
\cite[Proposition 5.2.3]{Katz70}, and moreover their argument is independent of the dimension of the variety $X$.
In this paper we show that the elegant argument given in \cite{EG} can be adapted to general restricted $\Lambda$-modules
under the assumption that the anchor map $\delta: \Lambda_1/ \Lambda_0 \rightarrow T_X$ is generically surjective. 
We also give an example showing that the result is false when $\delta$ is not generically surjective.

In the last section we present an analogue of the main Theorem in a relative situation by taking the Rees construction 
$\Lambda^R$ on $X \times \mathbb{A}^1$ over $\mathbb{A}^1$ obtained from a sheaf of rings of differential operators $\Lambda$ on $X$.
Here we need to restrict attention to sheaves $\Lambda$ obtained as a universal enveloping algebra of a restricted Lie algebroid $H$ over $X$.
Our theorem (Theorem \ref{HitchindescentRees}) then gives an explicit deformation over the affine line $\mathbb{A}^1$ of the classical Hitchin map
of $H^\vee$-valued Higgs sheaves to the Hitchin map \eqref{Hitchinmapfact} $h'_\Lambda$ of $\Lambda$-modules. This result was already 
obtained in \cite{LP01} for a smooth projective curve $X$ in the case where $\Lambda = \mathcal{D}_X$ and $H = T_X$, see also 
\cite[Section 4.5]{Langer} for some partial generalizations.

Finally we mention that the fibers of the Hitchin map \eqref{Hitchinmapfact} $h'_\Lambda$ are described in \cite{Groech16} for a smooth projective 
curve $X$ and for $\Lambda = \mathcal{D}_X$. For general $X$ and $\Lambda$, a description of the fibers of $h'_\Lambda$ seems to be missing in the literature and studying it would be an interesting future line of work.

We would like to thank Carlos Simpson for many useful discussions during the preparation of this article.

\noindent\textbf{Acknowledgments.} 
This work was started during a research stay in 2017 of the first-named author at 
the Laboratoire J.-A. Dieudonn{\'e} at the Universit{\'e} Côte d'Azur and he would like to thank the laboratory for its hospitality. This research was partially funded by MINECO (grants MTM2016-79400-P, PID2019-108936GB-C21 and ICMAT Severo Ochoa project SEV-2015-0554) and the 7th European Union Framework Programme (Marie Curie IRSES grant 612534 project MODULI). During the development of this work, the first-named author was also supported by a predoctoral grant from Fundaci\'on La Caixa -- Severo Ochoa International Ph.D. Program and a postdoctoral position associated to the ICMAT Severo Ochoa project.

\section{Preliminaries on sheaves of rings of differential operators}

\subsection{Definitions and properties}

Let $\mathbb{K}$ be an algebraically closed field. Let $X$ and $S$ be schemes of finite type over $\mathbb{K}$ and let 
$$ \pi : X \longrightarrow S$$ 
be a morphism. We recall from \cite[Section 2]{Simpson1} the definition of 
sheaf of rings of differential operators on 
$X$ over $S$. We note that the original definition in \cite{Simpson1} was given over $\mathbb{K} = \mathbb{C}$, but it can be 
considered over an arbitrary base field $\mathbb{K}$.

\begin{definition}
A sheaf of rings of differential operators on $X$ over $S$ is a sheaf of associative and unital $\SO_S$-algebras $\Lambda$ over $X$ with a filtration
$\Lambda_0 \subset \Lambda_1 \subset \cdots $ which satisfies the properties
\begin{enumerate}
    \item $\Lambda = \bigcup_{i=0}^\infty \Lambda_i$ and $\Lambda_i \cdot \Lambda_j \subset \Lambda_{i+j}$.
    \item The image of $\SO_X \to \Lambda$ equals $\Lambda_0$.
    \item The image of $\pi^{-1} (\SO_S)$ in $\SO_X$ is contained in the center of $\Lambda$.
    \item The left and right $\SO_X$-module structures on $\op{Gr}_i(\Lambda) := \Lambda_i/ \Lambda_{i-1}$ are 
    equal.
    \item The $\SO_X$-modules $\op{Gr}_i(\Lambda)$ are coherent.
    \item The graded $\SO_X$-algebra $\op{Gr}^\bullet(\Lambda) := \bigoplus_{i=0}^\infty \op{Gr}_i(\Lambda)$ is generated by
    $\op{Gr}_1(\Lambda)$.
\end{enumerate}
\end{definition}

Because of property (4) we have that for each $D\in \Lambda_1$ the commutator $[D,f]$ with $f\in \SO_X$ is an element of $\Lambda_0$. 
Moreover, for each $D\in \Lambda_1$ and each $f,g\in \SO_X$ we have
$$[D,fg]=Dfg-fgD=Dfg-fDg+fDg-fgD=[D,f] g +f [D,g].$$
Thus, assuming  that $\Lambda_0 = \SO_X$, we see that the map 
$[D,-]:\SO_X \to \SO_X$ is a $\SO_S$-derivation that we will denote by $\delta_D$ (i.e., $\delta_D(f)=[D,f]$). 
Moreover, let us denote $H=\Lambda_1/\Lambda_0$. Then we have a short exact sequence
\begin{equation} \label{sbmap}
0\longrightarrow \Lambda_0=\SO_X \longrightarrow \Lambda_1 \stackrel{\smb}{\longrightarrow} H 
\longrightarrow 0.
\end{equation}
We call the map $\Lambda_1 \longrightarrow H$ the symbol map and we will denote it by $\smb$. We also note that the 
$\SO_X$-linear map $\delta: D \mapsto \delta_D$ factorizes through $H$, so that we obtain an $\SO_X$-linear map, also
denoted 
$$ \delta: H \longrightarrow \op{Der}_{\SO_S}(\SO_X, \SO_X) = T_{X/S}, $$
called the anchor map. Here $T_{X/S}$ is the relative tangent sheaf.

\bigskip

\begin{remark}
\label{rmk:delta0}
The condition that the anchor map $\delta = 0$ is easily seen to be equivalent to the fact that
the right and left $\SO_X$-module structures on $\Lambda$ are the same.
\end{remark}

\bigskip

In this paper we will be sometimes interested in sheaves of rings of differential operators having more properties.

\begin{definition}
Let $\Lambda$ be a sheaf of rings of differential operators on $X$ over $S$ with $H = \Lambda_1/ \Lambda_0$.
We say that $\Lambda$ is 
\begin{itemize}
    \item  almost abelian, if the graded algebra $\op{Gr}^\bullet(\Lambda)$ is abelian.
    \item  almost polynomial, if $\SO_X = \Lambda_0$, $H$ is locally free and the graded algebra $\op{Gr}^\bullet(\Lambda)$ equals
    the symmetric algebra $\mathrm{Sym}^\bullet(H)$.
    \item  split almost polynomial, if $\Lambda$ is almost polynomial and the exact sequence (\ref{sbmap}) is split.
\end{itemize}
\end{definition}

For completeness we recall the following

\begin{definition}
A $\SO_S$-Lie algebroid on $X$ over $S$ is a triple $(H,[-,-], \delta)$
consisting of an $\SO_X$-module $H$, which is also a sheaf of
$\SO_S$-Lie algebras, and an $\SO_X$-linear anchor map 
$\delta : H \to T_{X/S}$ satisfying the following
condition for all local sections $f \in \SO_X$ and $D_1,D_2 \in H$
$$[D_1,fD_2] = f [D_1 , D_2] + \delta_{D_1}(f)D_2. $$
\end{definition}

\begin{remark}
If $\Lambda$ is almost abelian, then $(H = \Lambda_1/ \Lambda_0 ,[-,-], \delta)$ is a $\SO_S$-Lie algebroid on $X$ (see 
Proposition \ref{HrestrictedLiealgebroid}  for the ``restricted" version).
\end{remark}

\bigskip

\subsection{Restricted sheaf of rings of differential operators}

From now on we assume that the characteristic of $\mathbb{K}$ is $p > 0$. In that situation we introduce the
following

\begin{definition} \label{defrestrictedlambda}
A restricted sheaf of rings of differential operators on $X$ over $S$ is a sheaf of rings of 
differential operators $\Lambda$ on $X$ over $S$ together with a  map
\begin{eqnarray*}
\xymatrixrowsep{0.05pc}
\xymatrixcolsep{0.3pc}
\xymatrix{
[p]&:&\Lambda_1 \ar[rrrr] &&&& \Lambda_1\\
&& D \ar@{|->}[rrrr] &&&& D^{[p]}
}
\end{eqnarray*}
called a $p$-structure, such that for every local sections $D,D_1,D_2\in \Lambda_1$ and every local section 
$f\in \SO_{X}$ the following properties hold
\begin{enumerate}
\item $\ad(D^{[p]})=\ad(D)^p$
\item $(D_1+D_2)^{[p]} = D_1^{[p]}+D_2^{[p]} + \sum_{i=1}^{p-1} s_i(D_1,D_2)$
\item $(fD)^{[p]}=f^pD^{[p]}+\delta_{fD}^{p-1}(f) D$
\item $f^{[p]}=f^p$
\end{enumerate}
where $s_i(x,y)$ are the universal Lie polynomials for the commutator in the associative algebra $\Lambda$, defined by the following expression in $\Lambda[t]$
$$\ad(tx+y)^{p-1}(x)=\sum_{i=1}^{p-1}is_i(x,y)t^{i-1}.$$
\end{definition}

\begin{remark}
Note that property (1) is equivalent to the equality $\ad(D^{[p]})(E)=\ad(D)^p(E)$ for any local sections
$D, E \in  \Lambda_1$. In fact, by Jacobson's identity we have $\ad(D)^p = \ad(D^p)$, hence if
$D^{[p]} - D^p$ commutes with any $E \in \Lambda_1$, it commutes with any $E \in \Lambda$, since
$\Lambda$ is generated by $\Lambda_1$.
\end{remark}

\begin{remark} \label{centerLambda}
Let $F : X \to X$ denote the absolute Frobenius of $X$ and let $Z(\Lambda)$ denote the center of 
$\Lambda$. We note that the center $Z(\Lambda)$ does not have the structure of an $\SO_X$-module.
However, the left and right $\SO_X$-module structures on the direct image $F_*(Z(\Lambda))$ coincide, 
since for any local sections
$D \in \Lambda_1$ and $f \in \SO_X$ we have
$$[D, f^p] = \delta_D(f^p) = 0. $$
\end{remark}

\begin{proposition}
\label{prop:katzTangent}
For every local sections $D\in \Lambda_1$ and $f\in \SO_X$ we have
$$\delta_{fD}^{p-1}(f)=f\delta_D^{p-1}(f^{p-1}).$$
\end{proposition}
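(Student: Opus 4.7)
The plan is to reduce the statement to a purely commutative-algebraic identity about derivations in characteristic $p$, then verify it by a universal calculation.

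First, for $f,g \in \SO_X$ and $D \in \Lambda_1$ the commutator
$$[fD, g] = fDg - gfD = f(Dg - gD) = f[D,g]$$
(using that $f$ and $g$ commute in $\Lambda$) shows that $\delta_{fD} = f\cdot\delta_D$ as derivations of $\SO_X$. Setting $\partial := \delta_D$, the claim is equivalent to the identity
$$(f\partial)^{p-1}(f) = f\cdot\partial^{p-1}(f^{p-1})$$
for any derivation $\partial$ of a commutative $\mathbb{F}_p$-algebra and any element $f$. This reduced identity is universal: both sides are $\mathbb{F}_p$-polynomial expressions in the symbols $f, \partial f, \ldots, \partial^{p-1}f$, so it suffices to verify it in a convenient model, e.g.\ in $\mathbb{F}_p[x]$ with $\partial = d/dx$, or in the free polynomial ring on the formal symbols $\partial^i f$ with $\partial$ acting tautologically.

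To carry out the verification, I would invoke the Hochschild-type formula $(f\partial)^p = f^p\partial^p + (f\partial)^{p-1}(f)\cdot\partial$ valid in characteristic $p$ (this is essentially axiom (3) of a $p$-structure applied in the universal situation), together with the vanishing $\partial(f^p) = 0$. Applying both sides of this formula to a local coordinate $y$ with $\partial y = 1$, and comparing with the Leibniz expansion of $\partial^{p-1}(f^{p-1})$ --- where Wilson's theorem $(p-1)! \equiv -1 \pmod{p}$ supplies the critical mod-$p$ sign --- should isolate $(f\partial)^{p-1}(f)$ as $f\cdot\partial^{p-1}(f^{p-1})$.

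The main obstacle is the combinatorial matching of multinomial coefficients modulo $p$: the Leibniz expansion of $\partial^{p-1}(f^{p-1})$ is indexed by compositions of $p-1$ with weights $(p-1)!/\prod k_i!$, while the iterated expansion of $(f\partial)^{p-1}(f)$ is recursive and structurally different. The Hochschild-identity route above bypasses direct combinatorial bookkeeping by reducing everything to the single vanishing $\partial(f^p) = 0$ together with one application of the multinomial theorem, which is why I would favour it over a brute-force induction on $n$ for $(f\partial)^n(f)$.
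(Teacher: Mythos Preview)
Your reduction $\delta_{fD} = f\delta_D$ and the passage to a universal identity $(f\partial)^{p-1}(f) = f\,\partial^{p-1}(f^{p-1})$ for a derivation $\partial$ in characteristic $p$ are both correct and match the paper's opening move. The gap is in the verification of this reduced identity. You invoke only the Hochschild formula $(f\partial)^p = f^p\partial^p + (f\partial)^{p-1}(f)\cdot\partial$ and propose to apply it to a coordinate $y$ with $\partial y = 1$; but that application is a tautology. On the left, $(f\partial)^p(y) = (f\partial)^{p-1}(f\,\partial y)=(f\partial)^{p-1}(f)$; on the right, $f^p\partial^p(y) = 0$ and the remaining term is $(f\partial)^{p-1}(f)\,\partial(y) = (f\partial)^{p-1}(f)$. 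Nothing about $f\,\partial^{p-1}(f^{p-1})$ is produced, and the Leibniz/Wilson step you allude to is never connected to the Hochschild output in any concrete way.

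The paper closes this gap by invoking a \emph{second} expression for $(f\nu)^p$, namely Deligne's identity $(f\nu)^p = f^p\nu^p + f\,\nu^{p-1}(f^{p-1})\,\nu$ (cited from Katz). Equating the two formulae for $(f\nu)^p$ gives $(f\nu)^{p-1}(f)\,\nu = f\,\nu^{p-1}(f^{p-1})\,\nu$, and torsion-freeness of $T_{X/S}$ lets one cancel the $\nu$. So the missing ingredient in your proposal is precisely this second identity; Hochschild alone does not see the term $f\,\partial^{p-1}(f^{p-1})$. Your universal-polynomial principle is valid and could in principle furnish an independent proof, but that is exactly the combinatorial matching you say you want to avoid, and you have not indicated how it would actually be carried out.
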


\begin{proof}
The relative tangent sheaf $T_{X/S} \cong \op{Der}_{\SO_S}(\SO_X, \SO_X)$ with the standard commutator 
is a $\SO_S$-Lie algebroid. Moreover this Lie algebroid is equipped with a
$p$-structure $\nu \mapsto \nu^p\in T_{X/S}$ (see also Remark \ref{derivations}). Thus, by the Hochschild identity (see \cite[Lemma 1]{Hoch}, 
\cite[Lemma 4.3]{Langer}, 
\cite[Lemma 2.1]{Scha16}),
we have for every local derivation $\nu\in T_{X/S}$ and every local section $f\in \SO_X$ the equality 
$$(f\nu)^{p}=f^p\nu^{p}+(f\nu)^{p-1}(f) \nu$$
in the associative $\SO_S$-algebra $\mathrm{End}_{\SO_S}(\SO_X)$.
On the other hand, we have the following identity from Deligne (cf. \cite[Proposition 5.3]{Katz70})
$$(f\nu)^p=f^p\nu^p+f\nu^{p-1}(f^{p-1})\nu.$$
Therefore we have that for every $\nu\in T_X$
$$(f\nu)^{p-1}(f)\nu = f\nu^{p-1}(f^{p-1})\nu.$$
If $\nu=0$, then clearly $(f\nu)^{p-1}(f) = f\nu^{p-1}(f^{p-1})=0$. Otherwise, the left-hand side and right-hand side of the 
equality are multiples of the same nonzero section of the torsion free sheaf $T_{X/S}$, so they are equal if and only if
$$(f\nu)^{p-1}(f)=f\nu^{p-1}(f^{p-1}).$$
Therefore, the latter equality holds for every local derivation $\nu\in T_{X/S}$ and every local section $f\in \SO_X$. 
The proposition is then obtained by applying the previous equality to $\nu=\delta_D$ and taking into account that
$f\delta_D=\delta_{fD}$, i.e. that the anchor map $\delta$ is $\SO_X$-linear.
\end{proof}

\begin{corollary}
\label{cor:DeligneLambda}
If $\Lambda$ is a restricted sheaf of differential operators on $X$ over $S$, then for every local 
sections $D\in \Lambda_1$ and  $f\in \SO_X$ we have
$$(fD)^{[p]}=f^pD^{[p]}+ f \delta_D^{p-1}(f^{p-1}) D.$$
\end{corollary}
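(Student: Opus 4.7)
The plan is very short because this corollary is essentially an immediate consequence of combining two results already in hand: property (3) of the $p$-structure from Definition \ref{defrestrictedlambda}, and the identity established in Proposition \ref{prop:katzTangent}.

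First I would apply property (3) of the definition of a restricted sheaf of rings of differential operators to the local section $fD \in \Lambda_1$, obtaining
$$(fD)^{[p]} = f^p D^{[p]} + \delta_{fD}^{p-1}(f)\, D.$$
Then I would invoke Proposition \ref{prop:katzTangent}, which gives the identity $\delta_{fD}^{p-1}(f) = f\,\delta_D^{p-1}(f^{p-1})$. Substituting this into the previous equation yields
$$(fD)^{[p]} = f^p D^{[p]} + f\,\delta_D^{p-1}(f^{p-1})\, D,$$
which is the desired formula.

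There is no real obstacle here; all the work was done in proving Proposition \ref{prop:katzTangent}, where one had to deduce the Lie-algebroid version of the Deligne--Hochschild identity in $T_{X/S}$ and transport it back to $\Lambda_1$ via the anchor map. Once that proposition is available, the corollary is purely a matter of rewriting.
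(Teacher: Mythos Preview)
Your proposal is correct and matches the paper's approach exactly: the corollary is stated without proof immediately after Proposition~\ref{prop:katzTangent}, and the intended argument is precisely to substitute that proposition's identity into property~(3) of Definition~\ref{defrestrictedlambda}.
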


\bigskip

\subsection{The map $\iota : \Lambda_1 \to Z(\Lambda)$} \label{defiotamap}

Using the $p$-structure on $\Lambda$, we can define the following map, generalizing the difference of $p$-th power maps on vector fields
\begin{eqnarray*}
\xymatrixrowsep{0.05pc}
\xymatrixcolsep{0.3pc}
\xymatrix{
\iota&:&\Lambda_1 \ar[rrr] &&&& \Lambda \\
&& D \ar@{|->}[rrr] &&&& \iota(D)=D^p-D^{[p]}. 
}
\end{eqnarray*}

\begin{proposition}
\label{prop:iota-p-linear}
The map $\iota:\Lambda_1\to \Lambda$ is a $p$-linear map, i.e., for every local sections $D,D_1,D_2\in \Lambda_1$ 
and $f\in \SO_X$ we have 
\begin{enumerate}[a)]
\item $\iota(D_1+D_2)=\iota(D_1)+\iota(D_2),$
\item $\iota(fD)=f^p\iota(D).$
\end{enumerate}
\end{proposition}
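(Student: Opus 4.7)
The plan is to compute $D^p$ in the associative algebra $\Lambda$ and compare it with the value of $D^{[p]}$ prescribed by the axioms of the $p$-structure, observing that the two error terms are the same and therefore cancel.

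For (a), the universal Lie polynomials $s_i(D_1,D_2)$ appearing in property~(2) of the $p$-structure coincide with those produced by Jacobson's identity for the $p$-th power of a sum in any associative $\FF_p$-algebra. Applying Jacobson's identity in $\Lambda$ gives $(D_1+D_2)^p = D_1^p + D_2^p + \sum_{i=1}^{p-1} s_i(D_1,D_2)$; subtracting property~(2), the $s_i$-sums cancel and
\[
\iota(D_1+D_2) = (D_1^p - D_1^{[p]}) + (D_2^p - D_2^{[p]}) = \iota(D_1) + \iota(D_2).
\]

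For (b), Corollary~\ref{cor:DeligneLambda} furnishes $(fD)^{[p]} = f^p D^{[p]} + f\,\delta_D^{p-1}(f^{p-1})\,D$. It therefore suffices to prove the parallel associative identity
\[
(fD)^p \;=\; f^p D^p + f\,\delta_D^{p-1}(f^{p-1})\,D
\]
inside $\Lambda$, since subtracting this from the previous formula yields $\iota(fD) = f^p D^p - f^p D^{[p]} = f^p \iota(D)$. My plan for this associative identity is: first, establish by induction on $p$, using the commutation rule $Df = fD + \delta_D(f)$ in $\Lambda$ repeatedly, the Hochschild-type equality $(fD)^p = f^p D^p + \delta_{fD}^{p-1}(f)\,D$. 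The induction produces intermediate monomials $f^{a}\,\delta_D^{b_1}(f)\cdots\delta_D^{b_s}(f)\,D^{k}$, but in characteristic $p$ the binomial coefficients multiplying every term with $2 \le k \le p-1$ vanish; only the extreme monomials in $D^p$ and in $D$ survive. Second, invoke Proposition~\ref{prop:katzTangent} to rewrite $\delta_{fD}^{p-1}(f) = f\,\delta_D^{p-1}(f^{p-1})$, putting the formula in the same shape as $(fD)^{[p]}$.

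The main obstacle is the Hochschild-type identity for $(fD)^p$ within the associative algebra $\Lambda$ itself: Proposition~\ref{prop:katzTangent} handles the analogous computation in $\End_{\SO_S}(\SO_X)$, and the same combinatorial argument must be transposed to $\Lambda$. The transposition is justified because $\delta_D(f)\in \SO_X=\Lambda_0$ and $\SO_X$ is commutative, so the iterated derivatives $\delta_D^{k}(f)$ commute among themselves and with $f$, which is exactly the hypothesis the Hochschild-style computation requires. Once this is in place, the remainder of the argument is a mechanical subtraction of the two displayed equalities.
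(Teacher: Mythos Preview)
Your argument for (a) is identical to the paper's. For (b) your strategy is correct and reaches the same conclusion, but the paper takes a shorter path: rather than first establishing the Hochschild form $(fD)^p = f^pD^p + \delta_{fD}^{p-1}(f)\,D$ in $\Lambda$ and then invoking Proposition~\ref{prop:katzTangent} to convert it, the paper applies Deligne's identity \cite[Proposition~5.3]{Katz70} directly in the associative ring $\Lambda(U)$, obtaining
\[
(fD)^p = f^pD^p + f\,\ad(D)^{p-1}(f^{p-1})\,D = f^pD^p + f\,\delta_D^{p-1}(f^{p-1})\,D
\]
in one step. Since $\ad(D)(g)=\delta_D(g)$ for $g\in\SO_X$, this already matches Corollary~\ref{cor:DeligneLambda} term by term, and the subtraction is immediate. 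Your route via Hochschild plus Proposition~\ref{prop:katzTangent} is just the same equality traversed in the opposite direction (indeed, the proof of Proposition~\ref{prop:katzTangent} is precisely the comparison of the Hochschild and Deligne forms in $\End_{\SO_S}(\SO_X)$).

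One cautionary remark on your sketch: the phrase ``induction on $p$'' should be ``induction on the exponent'', and the claim that the intermediate terms carry \emph{binomial} coefficients that vanish for $2\le k\le p-1$ is not accurate as stated --- the combinatorics of the Hochschild expansion are more delicate than binomial. Since the Hochschild identity holds in any associative $\FF_p$-algebra in which $[D,f]\in\SO_X$ (it is a polynomial identity in the Ore extension $\SO_X[T;\delta_D]$, which surjects onto the subalgebra generated by $\SO_X$ and $D$), you can simply cite it, as the paper does for Deligne's identity, rather than reprove it.
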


\begin{proof}
a) Let us apply Jacobson's identity in the associative ring $\Lambda(U)$, where $U$ is any open 
subset where $D_1$ and $D_2$ are both defined
$$(D_1+D_2)^p=D_1^p+D_2^p+\sum_{i=1}^{p-1}s_i(D_1,D_2).$$
On the other hand, as $[p]$ is a $p$-structure on $\Lambda$, we have
$$(D_1+D_2)^{[p]}=D_1^{[p]}+D_2^{[p]} + \sum_{i=1}^{p-1}s_i(D_1,D_2).$$
Therefore, subtracting one from the other yields
$$\iota(D_1+D_2)=(D_1+D_2)^p-(D_1+D_2)^{[p]} = D_1^p+D_2^p-D_1^{[p]}-D_2^{[p]}=\iota(D_1)+\iota(D_2).$$
b) Let us consider $f\in \SO_X=\Lambda_0$ as a local section of $\Lambda$. Then we can apply Deligne's identity (cf. \cite[Proposition 5.3]{Katz70}) in the associative ring $\Lambda(U)$ for an open subset $U$ such that $f\in \SO_X(U)$ and $D\in \Lambda(U)$ and we obtain
$$(fD)^p=f^pD^p+f \op{ad}(D)^{p-1}(f^{p-1}) D.$$
As the adjoint of $D$ applied to any local function is simply $\delta_D$, we obtain
$$(fD)^p=f^pD^p+f \delta_D^{p-1}(f^{p-1}) D.$$
On the other hand, by Corollary \ref{cor:DeligneLambda} we have
$$(fD)^{[p]}=f^pD^{[p]}+f \delta_D^{p-1}(f^{p-1}) D.$$
Therefore, subtracting one from the other yields
$$\iota(fD)=(fD)^p-(fD)^{[p]}=f^pD^p-f^p D^{[p]} = f^p\iota(D).$$
\end{proof}

%\begin{proposition}
%\label{prop:adjointPower}
%For every local sections $D,E \in \Lambda$
%$$\ad(D)^p(E)=\ad(D^p)(E)$$
%\end{proposition}

%\begin{proof}
%Let us prove by induction that for every local sections $D\in \Lambda$ and $E\in \Lambda$ and every $n\ge 1$ we have
%$$D^nE = \sum_{k=0}^n \binom{n}{k} \ad(D)^k(E) D^{n-k}$$
%For $n=1$ we have
%$$DE = [D,E]+ED = \ad(D)E+ED$$
%suppose that the identity holds for $n$. Then
%\begin{multline*}
%D^{n+1}E = D D^n E = \sum_{k=0}^n \binom{n}{k} D \ad(D)^k(E) D^{n-k} \\
%=\sum_{k=0}^n \binom{n}{k} \left( \ad(D)\left(\ad(D)^k(E) \right) D^{n-k} + \ad(D)^k(E) D D^{n-k} \right)\\
%=\sum_{k=0}^n \binom{n}{k} \left( \ad(D)^{k+1}(E) D^{n-k}+ \ad(D)^k(E) D^{n-k+1} \right)\\
%= \ad(D)^{n+1}(E) + \sum_{k=1}^n \left( \binom{n}{k-1}+\binom{n}{k} \right) \ad(D)^k(E) D^{n+1-k} + ED^{n+1}\\
%= \sum_{k=0}^{n+1} \binom{n+1}{k} \ad(D)^k(E) D^{n+1-k}
%\end{multline*}
%Applying the formula to $n=p$ and taking into account that $p$ divides $\binom{p}{k}$ for $0<k<p$ yields
%$$D^pE = \sum_{k=0}^p \binom{p}{k} \ad(D)^k(E) D^{p-k} = \ad(D)^p(E) + ED^p$$
%So for every $E\in \Lambda$
%$$\ad(D)^p(E) = D^pE-ED^p=\ad(D^p)(E)$$
%\end{proof}

\begin{proposition}
\label{cor:iotaCenter}
The image of $\iota$ lies in the center $Z(\Lambda)$ of $\Lambda$.
\end{proposition}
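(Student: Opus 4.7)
The plan is to show directly that $\ad(\iota(D)) = 0$ as an operator on $\Lambda$ for every local section $D \in \Lambda_1$, which is equivalent to $\iota(D) \in Z(\Lambda)$. The two ingredients are already at hand. First, Jacobson's identity in the associative $\SO_S$-algebra $\Lambda$, applied in characteristic $p$, yields
$$\ad(D^p) = \ad(D)^p$$
as endomorphisms of $\Lambda$. Second, property (1) of the $p$-structure in the definition of a restricted sheaf of rings of differential operators gives
$$\ad(D^{[p]}) = \ad(D)^p.$$
Subtracting these two identities, $\ad(\iota(D)) = \ad(D^p - D^{[p]}) = 0$, so $\iota(D)$ commutes with every local section of $\Lambda$, which is precisely the claim.

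The only point to verify is that both identities act on all of $\Lambda$, not merely on $\Lambda_1$. Jacobson's identity is automatic in any associative ring. For the $p$-structure identity, one can either appeal to property (1) as stated in the definition, or, if one prefers the equivalent formulation on test elements $E \in \Lambda_1$ recorded in the subsequent remark, extend it to $\Lambda$ by noting that $\Lambda$ is generated as an $\SO_S$-algebra by $\Lambda_1$ (consequence of property (6) of the filtration together with $\Lambda_0 \subset \Lambda_1$), and that the centralizer of any element in an associative ring is a subring. I anticipate no technical obstacle; the statement is essentially a one-line consequence of the $p$-structure axiom and Jacobson's identity.
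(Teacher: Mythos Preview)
Your proof is correct and follows essentially the same approach as the paper: both use Jacobson's identity $\ad(D^p)=\ad(D)^p$ together with axiom (1) of the $p$-structure to conclude $\ad(\iota(D))=0$, with the extension from $\Lambda_1$ to $\Lambda$ handled via the fact that $\Lambda_1$ generates $\Lambda$. The paper carries out the computation by testing against $E\in\Lambda_1$ and then invoking generation, which is exactly the alternative you spell out in your second paragraph.
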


\begin{proof}
Using Jacobson's identity $\ad(D^p) = \ad(D)^p$ we obtain that for any local sections $D,E \in  \Lambda_1$ 
\begin{eqnarray*}
\ad(\iota(D)) (E) = \ad(D^p-D^{[p]}) (E) & = & \ad(D^p)(E) -\ad(D^{[p]})(E) \\
                                    &  = & \ad(D)^p (E)  -\ad(D)^p(E)  = 0.
\end{eqnarray*}
So $\iota(D)$ commutes with every element in $\Lambda_1$. As $\Lambda_1$ generates $\Lambda$, $\iota(D)$ commutes with 
every element in $\Lambda$.
\end{proof}

Observe that for each $f\in \SO_X$ we have $\iota(f)=f^{[p]}-f^p=0$ and that for each $f\in \SO_X$ and $D \in \Lambda_1$ 
we have
$$\iota(f+D)=\iota(f)+\iota(D)=\iota(D).$$
So $\iota$ factorizes through the quotient
$$\iota:\Lambda_1/\Lambda_0 = H \longrightarrow Z(\Lambda).$$
Then, as $\iota$ is a $p$-linear map, it induces an $\SO_X$-linear map
$$\iota: H \longrightarrow F_*(Z(\Lambda)),$$
where $F$ denotes the absolute Frobenius of $X$.
Moreover, $F_*(Z(\Lambda))$ is a commutative $\SO_X$-algebra (see Remark \ref{centerLambda}), so, 
by the universal property of the symmetric algebra, the map $\iota$ induces a map of sheaves of 
commutative $\SO_X$-algebras
$$\iota:\Sym^\bullet (H) \longrightarrow F_*(Z(\Lambda)).$$

\begin{proposition}
Suppose that $\Lambda$ is almost polynomial. Then the induced map $\iota:\Sym^\bullet(H) \to F_*(Z(\Lambda))$ 
is injective.
\end{proposition}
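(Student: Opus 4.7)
The plan is to work locally, apply a PBW-type decomposition for $\Lambda$, and then use a leading-symbol computation in the filtration on $\Lambda$ to reduce the question to linear independence of distinct monomials in $\Sym^\bullet(H)$.

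Since injectivity is local and $H$ is locally free, we may pass to an open set over which $H$ is free with basis $\xi_1,\ldots,\xi_m$, and choose lifts $D_i\in\Lambda_1$ with $\smb(D_i)=\xi_i$. The almost polynomial hypothesis $\op{Gr}^\bullet(\Lambda)\cong\Sym^\bullet(H)$ then yields a PBW decomposition: the ordered monomials $D^\alpha:=D_1^{\alpha_1}\cdots D_m^{\alpha_m}$ ($\alpha\in\NN^m$) form a free $\SO_X$-basis of $\Lambda$, with $D^\alpha\in\Lambda_{|\alpha|}$ of symbol $\xi^\alpha\in\op{Gr}_{|\alpha|}\Lambda=\Sym^{|\alpha|}(H)$. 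In particular $\Sym^{|\alpha|}(H)$ is $\SO_X$-free on the $\xi^\alpha$ of total degree $|\alpha|$.

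The key step is to compute the top filtered part of $\iota(\xi^\alpha)$. Since $\iota$ is an $\SO_X$-algebra map into the commutative ring $F_*(Z(\Lambda))$ and $\iota(\xi_i)=D_i^{p}-D_i^{[p]}$, we obtain
$$\iota(\xi^\alpha)\;=\;\prod_{i=1}^m\bigl(D_i^{p}-D_i^{[p]}\bigr)^{\alpha_i}\;\in\;\Lambda_{p|\alpha|}.$$
Because $\iota(\xi_i)\in Z(\Lambda)$ commutes with $D_i$, so does $D_i^{[p]}=D_i^p-\iota(\xi_i)$, hence $D_i^p$ and $D_i^{[p]}$ commute and each factor can be expanded binomially. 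Every monomial in the expansion involving a total of $k=\sum_i k_i\geq 1$ occurrences of $D_i^{[p]}\in\Lambda_1$ in place of $D_i^{p}\in\Lambda_p$ has filtration order $p|\alpha|-(p-1)k\leq p|\alpha|-(p-1)$, so all subleading terms lie in $\Lambda_{p|\alpha|-1}$. Consequently $\iota(\xi^\alpha)\equiv D^{p\alpha}\pmod{\Lambda_{p|\alpha|-1}}$, with symbol $\xi^{p\alpha}\in\Sym^{p|\alpha|}(H)$.

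Now assume $\omega=\sum_\alpha f_\alpha\xi^\alpha\neq 0$ satisfies $\iota(\omega)=0$ and let $N=\max\{|\alpha|:f_\alpha\neq 0\}$. The Frobenius-twisted $\SO_X$-linearity of $\iota$ (the action on the target $F_*(Z(\Lambda))$ is $f\cdot z=f^{p}z$) gives $\iota(f\xi^\alpha)=f^{p}\iota(\xi^\alpha)$, so $\iota(\omega)=\sum_\alpha f_\alpha^{\,p}\iota(\xi^\alpha)$. Contributions with $|\alpha|<N$ sit in $\Lambda_{p(N-1)}\subseteq\Lambda_{pN-1}$, and the subleading terms from $|\alpha|=N$ sit in $\Lambda_{pN-(p-1)}\subseteq\Lambda_{pN-1}$, so reducing modulo $\Lambda_{pN-1}$ leaves $\sum_{|\alpha|=N}f_\alpha^{\,p}\xi^{p\alpha}=0$ in $\Sym^{pN}(H)$. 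The monomials $\xi^{p\alpha}$ for distinct $\alpha$ with $|\alpha|=N$ are part of a free $\SO_X$-basis of $\Sym^{pN}(H)$, forcing $f_\alpha^{\,p}=0$; since $X$ is a variety, $\SO_X$ is reduced and Frobenius on $\SO_X$ is injective, so $f_\alpha=0$, contradicting the choice of $N$. The crux is the filtration computation in the previous paragraph, showing that every binomial cross-term drops in filtration by at least $p-1$; this is exactly where the almost polynomial hypothesis (rather than merely almost abelian) is used in an essential way, so that $\{D^{p\alpha}:|\alpha|=N\}$ have pairwise distinct $\SO_X$-independent symbols in $\op{Gr}_{pN}\Lambda=\Sym^{pN}(H)$.
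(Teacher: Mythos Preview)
Your proof is correct and follows essentially the same strategy as the paper: both show that passing to the associated graded (equivalently, taking the top filtered symbol) turns $\iota$ into the Frobenius $p$-th power map on $\Sym^\bullet(H)$, which is injective. The paper phrases this via multiplicativity of the composite $\smb\circ\iota$, while you carry it out in explicit local PBW coordinates with a filtration/binomial computation, but the underlying argument is the same.
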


\begin{proof}
We note that the symbol map $\smb:\Lambda \longrightarrow \Gr^\bullet(\Lambda)\cong \Sym^\bullet(H)$ is 
a multiplicative (but not $\SO_X$-linear) map, so, composing with $\iota$, we obtain a multiplicative map
\begin{eqnarray*}
\xymatrix{
\Sym^\bullet(H) \ar[r]^-{\iota} \ar[dr]_{\smb(\iota)} & F_*(Z(\Lambda)) \ar[d]^{\smb}\\
& F_*(\Sym^\bullet(H)).
}
\end{eqnarray*}
To prove that $\ker(\iota)=0$ it is enough to prove that $\ker(\smb(\iota))=0$. As $\Lambda$ is 
almost polynomial, we have for every non-zero local $D\in H$ and any representative 
$\overline{D} \in \Lambda_1$ with $\smb(\overline{D})=D$ 
$$\smb(\overline{D}^p)=D^p\in \Sym^p(H).$$
So
$$\smb(\iota(D))=\smb(\overline{D}^p)=D^p\ne 0.$$
Moreover, for every local section $D\in \Sym^\bullet (H)$ there exist $D_1,\ldots,D_k \in H$ 
such that $D= D_1\cdots D_k+\tilde{D}$ with $\tilde{D}$ of degree $<k$. Therefore
$$\smb(\iota)(D)= \prod_{j=1}^k \smb(\iota)(D_j)= \prod_{j=1}^k D_j^p \ne 0.$$
\end{proof}

\section{Properties of almost abelian restricted sheaves of rings of differential operators}

Assume that the characteristic of $\mathbb{K}$ is $p>0$. Let $\pi : X \to S$ be a morphism between schemes
of finite type over $\mathbb{K}$. 

\subsection{Restricted $\SO_S$-Lie algebroid}

We need to recall some definitions (\cite{Hoch}, \cite[Section 3.1]{Ru}, \cite[Definition 4.2]{Langer}, 
\cite[Definition 2.2]{Scha16}).

\begin{definition} 
A restricted $\SO_S$-Lie algebroid on $X$  is a quadruple 
$(H,[-,-], \delta, [p])$
consisting of an $\SO_X$-module $H$, which is also a sheaf of
restricted $\SO_S$-Lie algebras, a map
$[p]: H \to H$ and an $\SO_X$-linear anchor map $\delta : H \to T_{X/S}$ satisfying the following
conditions for all local sections $f \in \SO_X$ and $D,D_1,D_2 \in H$
\begin{enumerate}
    \item $[D_1,fD_2] = f [D_1 , D_2] + \delta_{D_1}(f)D_2$,
    \item   $(fD)^{[p]}=f^pD^{[p]}+\delta_{fD}^{p-1}(f) D$.
\end{enumerate}
\end{definition}

\begin{remark} \label{derivations}
The standard example of restricted $\SO_S$-Lie algebroid on $X$ over $S$ is the relative tangent sheaf 
$T_{X/S} \cong \op{Der}_{\SO_S}(\SO_X, \SO_X)$ with the standard Lie bracket, $[p]$ the p-th power map and
$\delta$ the identity map. Note that condition (2) is then equivalent to the Hochschild identity (\cite[Lemma 1]{Hoch}).
\end{remark}

\subsection{Examples of almost abelian restricted sheaves of rings of differential operators}

We consider a restricted sheaf $\Lambda$ of rings of differential operators as in Definition \ref{defrestrictedlambda}.
In this subsection we assume that $\Lambda$ is almost abelian, i.e., the graded algebra $\op{Gr}^\bullet(\Lambda)$ is abelian.
Then for any two local sections $D_1,D_2\in \Lambda_1$ we have 
$$[\smb(D_1),\smb(D_2)]_{\op{Gr}^\bullet(\Lambda)}=0\in \Lambda_2/\Lambda_1,$$
so $[D_1,D_2]\in \Lambda_1$ and therefore $\Lambda_1$ with the induced commutator and anchor $\delta_D(f)=[D,f]$ for $D\in \Lambda_1$ and $f\in \SO_X$ becomes an $\SO_S$-Lie algebroid. In this case, conditions (1)-(3) of Definition \ref{defrestrictedlambda} are equivalent to 
asking that $(\Lambda_1,[-,-],\delta, [p])$ is a restricted $\SO_S$-Lie algebroid. Condition (4) is then equivalent to asking that 
the inclusion of $\SO_S$-Lie algebroids
$$(\SO_X, [-,-]=0, \delta=0, (-)^p ) \hookrightarrow (\Lambda_1,[-,-], \delta , [p])$$
is a homomorphism  of restricted $\SO_S$-Lie algebroids.

We first need some information on the universal Lie polynomials used in Definition \ref{defrestrictedlambda}.

\begin{lemma}
\label{lemma:extensionOrder0}
Let $\Lambda$ be any sheaf of rings of differential operators on $X$ over $S$. 
Let $D\in \Lambda_1$ and $f\in \SO_X$. Then for every $i<p-1$
$$s_i(D,f)=0$$
and
$$s_{p-1}(D,f)=\delta_D^{p-1}(f).$$
\end{lemma}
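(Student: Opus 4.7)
The plan is to unpack the defining identity for the universal Lie polynomials,
$$\ad(tD+f)^{p-1}(D)=\sum_{i=1}^{p-1} i\, s_i(D,f)\, t^{i-1},$$
and compute the left-hand side by hand. The key observation is that, because $\Lambda_0=\SO_X$ is a commutative subring of $\Lambda$, the operator $\ad(f)$ vanishes on all of $\Lambda_0$, while on $D$ it returns
$$\ad(f)(D) = [f,D] = -[D,f] = -\delta_D(f) \in \SO_X.$$
So after one application of $\ad(tD+f)$ we have already landed in $\SO_X$, and from then on only the $t\ad(D)$ part of $\ad(tD+f)$ can contribute.

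More precisely, I would prove by induction on $k\ge 1$ that
$$\ad(tD+f)^{k}(D) = -t^{k-1}\,\delta_D^{k}(f).$$
The case $k=1$ is the display above. For the inductive step, since $\delta_D^k(f)\in\SO_X$, we have $[f,\delta_D^k(f)]=0$ and $[D,\delta_D^k(f)] = \delta_D^{k+1}(f)$, hence
$$\ad(tD+f)\bigl(-t^{k-1}\delta_D^k(f)\bigr)
= -t^{k-1}\bigl(t\,\delta_D^{k+1}(f) + 0\bigr) = -t^{k}\delta_D^{k+1}(f),$$
completing the induction.

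Setting $k=p-1$ gives $\ad(tD+f)^{p-1}(D) = -t^{p-2}\,\delta_D^{p-1}(f)$, which is a single monomial in $t$. Comparing coefficients of $t^{i-1}$ with the defining identity, for $1\le i\le p-2$ we obtain $i\,s_i(D,f)=0$; since $i$ is a unit in $\FF_p$ this forces $s_i(D,f)=0$. For $i=p-1$ we get $(p-1)\,s_{p-1}(D,f) = -\delta_D^{p-1}(f)$, and using $p-1\equiv -1 \pmod p$ we conclude $s_{p-1}(D,f)=\delta_D^{p-1}(f)$. There is no serious obstacle here: the argument reduces to the single observation that $\ad(f)$ annihilates $\SO_X$, which collapses the binary expansion of $\ad(tD+f)^{p-1}$ to one surviving word.
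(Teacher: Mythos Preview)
Your proof is correct. Both your argument and the paper's rest on the same observation: once one application of $\ad(tD+f)$ lands in $\SO_X$, the $\ad(f)$ summand dies and only $t\,\ad(D)$ survives. The difference is purely in bookkeeping. The paper invokes the explicit word expansion
$$s_i(x_1,x_2)=-\frac{1}{i}\sum_{\substack{\sigma:\{1,\ldots,p-1\}\to\{1,2\}\\ |\sigma^{-1}(1)|=i}}\ad(x_{\sigma(1)})\cdots\ad(x_{\sigma(p-1)})(x_2)$$
and argues term by term that any $\sigma$ taking the value $2$ somewhere gives zero, leaving only $\sigma\equiv 1$. You instead compute the generating function $\ad(tD+f)^{p-1}(D)$ directly by a short induction, never writing out the sum. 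Your route is slightly more self-contained, since it uses only the defining identity already recorded in Definition~\ref{defrestrictedlambda} and avoids quoting the word formula; the paper's route makes the combinatorics of which monomial survives more explicit. Either way the substance is the same single vanishing observation.
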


\begin{proof}
In any associative algebra of characteristic $p$ it is a classical result that we can write 
the Lie polynomial $s_i(x_1,x_2)$ for $1 \leq i \leq p-1$ as follows
$$s_i(x_1,x_2)=-\frac{1}{i} \sum_{\begin{array}{c}
\sigma:\{1,\ldots,p-1\} \to \{1,2\}\\
|\sigma^{-1}(1)|=i
\end{array}} \ad(x_{\sigma(1)}) \cdots \ad(x_{\sigma(p-1)}) (x_2).$$
Observe that for $x_1=D\in \Lambda_1$ and $x_2=f\in \SO_X$ we have the following equalities
$$\ad(x_1)(x_2)=\delta_D(f)\in \SO_X, \ \
\ad(x_1)(x_1)=0, \ \
\ad(x_2)(g)=0 \ \  \forall g \in \SO_X.$$
In particular, observe that for any indices $i$ and $j$
$$\ad(x_i)(x_j)\in \SO_X, \ \  \ad(x_i)(g)\in \SO_X \ \ \forall g\in \SO_X.$$
Thus, for  $i=1,2$ and $g\in \SO_X$
$$\ad(x_2)\ad(x_i)(g)=0.$$
In particular, if $\sigma(j)=2$ for some $j<p-1$ we have that
$$\ad(x_{\sigma(j+1)})\cdots \ad(x_{\sigma(p-1)})(x_2)\in \SO_X.$$
So
$$\ad(x_2)\ad(x_{\sigma(j+1)})\cdots \ad(x_{\sigma(p-1)})(x_2)=0,$$
and the corresponding summand in the expression of $s_i(D,f)$ would be zero. Similarly, if $\sigma(p-1)=2$ we have $\ad(x_2)(x_2)=0$ and the whole expression is zero. Thus for the sum to be non-zero 
we must have $\sigma(j)=1$ for all $j=1,\ldots,p-1$. Finally, we have that for $i=p-1$
$$s_{p-1}(D,f)=-\frac{1}{p-1} \ad(D) \cdot \ad(D) (f) = -\frac{1}{p-1} \delta_D^{p-1}(f) = \delta_D^{p-1}(f).$$
\end{proof}

\begin{proposition} \label{HrestrictedLiealgebroid}
If $\Lambda$ is an almost abelian restricted ring of differential operators on $X$ over $S$, then 
$H = \Lambda_1/\Lambda_0$ inherits a restricted $\SO_S$-Lie algebroid structure 
$(H,[-,-]_H, \delta, [p])$ such that the short exact sequence  (\ref{sbmap}) becomes an exact sequence
of restricted $\SO_S$-Lie algebroids.
\end{proposition}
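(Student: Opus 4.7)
The plan is to build the desired structure on $H$ by descending the ambient structure on $\Lambda_1$ to the quotient $H = \Lambda_1/\Lambda_0$, and then check that the sequence (\ref{sbmap}) is compatible with these operations. Since $\Lambda$ is almost abelian, the associative commutator of $\Lambda$ restricts to a bracket on $\Lambda_1$, the anchor $\delta$ has already been defined in Section 2.1, and the $p$-operation on $\Lambda_1$ is given by hypothesis. The axioms of a restricted $\SO_S$-Lie algebroid structure on $(\Lambda_1, [-,-], \delta, [p])$ follow from the Leibniz computation $[D_1, f D_2] = f[D_1, D_2] + \delta_{D_1}(f) D_2$ carried out in Section 2.1, combined with axioms (1)--(3) of Definition \ref{defrestrictedlambda}.

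The main step is to show that the bracket and the $p$-operation on $\Lambda_1$ descend to $H$. For the bracket, given $f \in \Lambda_0 = \SO_X$ and $D \in \Lambda_1$ one has $[D, f] = \delta_D(f) \in \SO_X = \Lambda_0$, hence $[\Lambda_0, \Lambda_1] \subset \Lambda_0$ and the bracket passes to $H$. For the $p$-operation I would show that $D_1 \equiv D_2 \pmod{\Lambda_0}$ implies $D_1^{[p]} \equiv D_2^{[p]} \pmod{\Lambda_0}$. Writing $D_1 = D_2 + f$ with $f \in \SO_X$ and applying axiom (2) of Definition \ref{defrestrictedlambda} yields
\[
(D_2 + f)^{[p]} = D_2^{[p]} + f^{[p]} + \sum_{i=1}^{p-1} s_i(D_2, f).
\]
By Lemma \ref{lemma:extensionOrder0} the terms $s_i(D_2, f)$ vanish for $i < p-1$ and $s_{p-1}(D_2, f) = \delta_{D_2}^{p-1}(f) \in \SO_X$, while axiom (4) of Definition \ref{defrestrictedlambda} gives $f^{[p]} = f^p \in \SO_X$. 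Consequently $(D_2 + f)^{[p]} - D_2^{[p]} \in \Lambda_0$, as needed, and $[p]$ factors through $H$.

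With bracket, anchor and $p$-operation now defined on $H$, I would verify that $(H, [-,-]_H, \delta, [p])$ satisfies the axioms of a restricted $\SO_S$-Lie algebroid by reading off each identity from the corresponding identity that already holds in $\Lambda_1$. The restricted Lie algebra identities (Jacobson, additive expansion via the $s_i$'s, scalar $p$-homogeneity), as well as condition (1) of the restricted Lie algebroid definition, transfer from $\Lambda_1$ to $H$ by applying the quotient map, and condition (2), namely $(fD)^{[p]} = f^p D^{[p]} + \delta_{fD}^{p-1}(f) D$, is simply axiom (3) of Definition \ref{defrestrictedlambda} read in $H$.

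Finally, to check that (\ref{sbmap}) is an exact sequence of restricted $\SO_S$-Lie algebroids, I would equip $\SO_X$ with its standard structure (zero bracket, zero anchor, $p$-operation equal to the Frobenius). The inclusion $\SO_X = \Lambda_0 \hookrightarrow \Lambda_1$ respects the anchor since $\delta_f(g) = [f,g] = 0$ for $f, g \in \SO_X$, respects the bracket by commutativity of $\SO_X$ inside $\Lambda$, and respects the $p$-operation by axiom (4) of Definition \ref{defrestrictedlambda}. The projection $\Lambda_1 \twoheadrightarrow H$ is a morphism by construction of the descended operations. I expect the subtle part to be the $p$-descent step, as it relies precisely on the vanishing of the lower universal Lie polynomials $s_i(D, f)$ provided by Lemma \ref{lemma:extensionOrder0}; everything else is bookkeeping of identities already established.
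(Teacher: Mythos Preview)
Your proposal is correct and follows essentially the same approach as the paper: descend the bracket, anchor, and $p$-operation from $\Lambda_1$ to $H$, using Lemma \ref{lemma:extensionOrder0} as the key technical input to see that $(D+f)^{[p]} \equiv D^{[p]} \pmod{\Lambda_0}$, and then observe that the remaining axioms transfer under the symbol map. The paper's proof is terser about verifying the exact-sequence compatibility, but your extra detail there is routine bookkeeping and matches the paper's intent.
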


\begin{proof}
First of all, for each $D_1,D_2\in H$ define $[D_1,D_2]_H = \smb([\overline{D_1},\overline{D_2}]_{\Lambda})$ 
for any $\overline{D_i}$ such that $\smb(\overline{D_i})=D_i$ for $i=1,2$. In order to prove that it 
is well-defined observe that for each $f_1,f_2\in \SO_X$ we have
$$\smb([f_1+\overline{D_1},f_2+\overline{D_2}]_{\Lambda})=\smb([\overline{D_1},\overline{D_2}]_{\Lambda_1}+\delta_{\overline{D_1}}(f_2)-\delta_{\overline{D_2}}(f_1)) = \smb([\overline{D_1},\overline{D_2}]_{\Lambda_1}).$$
Similarly, as $\delta_f(g)=[f,g]_{\Lambda}=0$ for each $f,g \in \SO_X$, clearly $\delta$ factorizes through $H$.

Finally, define $D^{[p]}=\smb(\overline{D}^{[p]})$. Then for each $f\in \SO_X$ we have that, using property (2) of 
the definition of $p$-structure and Lemma \ref{lemma:extensionOrder0} we have
\begin{multline*}
\smb((\overline{D}+f)^{[p]}) = \smb(\overline{D}^{[p]}+f^p+\sum_{i=1}^{p-1}s_i(D,f))=\smb(\overline{D}^{[p]}+f^p + \delta_{\overline{D}}^{p-1}(f))=\smb(\overline{D}^{[p]}).
\end{multline*}
By construction, taking the symbol of the corresponding expressions in (1), (2) and (3), those properties are also satisfied for the induced $p$-structure on $H$, and the symbol map $\smb:\Lambda_1\longrightarrow  H$ is a morphism of restricted $\SO_S$-Lie algebroids.
\end{proof}

\bigskip

On the other hand, let us consider a restricted $\SO_S$-Lie algebroid
$(H,[-,-], \delta, [p])$. Then the universal enveloping algebra\footnote{This sheaf of algebras is called the universal enveloping algebra of differential operators associated to $H$ in \cite{Langer}}
$\Lambda_H$ of the $\SO_S$-Lie algebroid $H$, as defined e.g. in \cite[Section 4.3]{To} or 
\cite[page 515]{Langer}, becomes a split almost polynomial  
restricted sheaf of rings of differential operators on $X$ over $S$ by taking the $p$-structure as follows: we have a splitting as $\SO_X$-modules
$$(\Lambda_H)_1=\SO_X \oplus H,$$ 
and we define for every $D\in H$ and every $f\in \SO_X$
$$(f+D)^{[p]}=f^p+D^{[p]}+\delta_D^{p-1}(f).$$
We will show in the next proposition that this map endows $\Lambda_H$ with the structure of a restricted sheaf of rings of 
differential operators. First we will need two lemmas.

\begin{lemma}
\label{lemma:inducedAditivity}
For any local sections $f_1,f_2\in \SO_X$ and $D_1,D_2\in H$ we have the following equality in $\Lambda_H$
$$\delta_{D_1}^{p-1}(f_1)+\delta_{D_2}^{p-1}(f_2)+\sum_{i=1}^{p-1} s_i(f_1+D_1,f_2+D_2)=\sum_{i=1}^{p-1} s_i(D_1,D_2)+\delta_{D_1+D_2}^{p-1}(f_1+f_2).$$
\end{lemma}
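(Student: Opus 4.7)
The plan is to compute the $p$-th power $\bigl((f_1+D_1)+(f_2+D_2)\bigr)^p$ inside the associative $\mathbb{K}$-algebra $\Lambda_H$ in two different ways and equate the results. Since $\Lambda_H$ has characteristic $p$, the universal Jacobson identity $(x+y)^p = x^p+y^p+\sum_{i=1}^{p-1} s_i(x,y)$ is available; moreover, since $\Lambda_H$ has been constructed as a sheaf of rings of differential operators on $X$ over $S$, Lemma \ref{lemma:extensionOrder0} applies inside it.

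For the first expansion, I would take $x=f_1+D_1$ and $y=f_2+D_2$ and apply Jacobson directly, obtaining
$$\bigl((f_1+D_1)+(f_2+D_2)\bigr)^p = (f_1+D_1)^p + (f_2+D_2)^p + \sum_{i=1}^{p-1} s_i(f_1+D_1,\, f_2+D_2).$$
Each summand $(f_i+D_i)^p$ would then be further expanded by applying Jacobson to the pair $(D_i,f_i)$ and invoking Lemma \ref{lemma:extensionOrder0}, which kills all universal polynomials $s_j(D_i,f_i)$ except the one at $j=p-1$, yielding $(f_i+D_i)^p = f_i^p + D_i^p + \delta_{D_i}^{p-1}(f_i)$.

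For the second expansion, I would regroup the same sum as $x=f_1+f_2$ and $y=D_1+D_2$ and apply exactly the same $(f+D)^p$ formula (Jacobson combined with Lemma \ref{lemma:extensionOrder0}) to obtain
$$\bigl((f_1+f_2)+(D_1+D_2)\bigr)^p = (f_1+f_2)^p + (D_1+D_2)^p + \delta_{D_1+D_2}^{p-1}(f_1+f_2).$$
Since $\SO_X$ is commutative of characteristic $p$, $(f_1+f_2)^p = f_1^p + f_2^p$; and a final application of Jacobson to $(D_1,D_2)$ expands $(D_1+D_2)^p = D_1^p + D_2^p + \sum_{i=1}^{p-1} s_i(D_1,D_2)$.

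Equating the two expansions and cancelling the monomials $f_1^p, f_2^p, D_1^p, D_2^p$ common to both sides leaves precisely the asserted identity. There is no real obstacle here: the argument is a purely formal double application of Jacobson's identity combined with the vanishing pattern of $s_i(D,f)$ recorded in Lemma \ref{lemma:extensionOrder0}, and the main thing to be careful about is simply the bookkeeping of which grouping is being used at each step.
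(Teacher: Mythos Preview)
Your proposal is correct and follows essentially the same approach as the paper: compute $(f_1+D_1+f_2+D_2)^p$ in $\Lambda_H$ via Jacobson's formula grouped first as $(f_1+D_1)+(f_2+D_2)$ and then as $(f_1+f_2)+(D_1+D_2)$, using Lemma~\ref{lemma:extensionOrder0} to simplify the $(f+D)^p$ terms, and cancel. The bookkeeping you outline matches the paper's exactly.
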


\begin{proof}
We will use Jacobson's formula to compute $(f_1+D_1+f_2+D_2)^p\in \Lambda_H$ in two different ways. 
On one hand, taking into account Lemma \ref{lemma:extensionOrder0} we have
\begin{multline*}
((f_1+D_1)+(f_2+D_2))^p = (f_1+D_1)^p+(f_2+D_2)^p+\sum_{i=1}^{p-1}s_i(f_1+D_1,f_2+D_2)\\
=f_1^p+D_1^p+\delta_{D_1}^{p-1}(f_1) + f_2^p+D_2^p+\delta_{D_2}^{p-1}(f_2)+\sum_{i=1}^{p-1}s_i(f_1+D_1,f_2+D_2).
\end{multline*}
On the other hand, we have
\begin{multline*}
((f_1+f_2)+(D_1+D_2))^p = f_1^p+f_2^p + (D_1+D_2)^p + \delta_{D_1+D_2}^{p-1}(f_1+f_2)\\
=f_1^p+f_2^p+D_1^p+D_2^p+\sum_{i=1}^{p-1} s_i(D_1,D_2)+\delta_{D_1+D_2}^{p-1}(f_1 + f_2).
\end{multline*}
Subtracting both expressions yields the desired equality.
\end{proof}

\begin{lemma}
\label{lemma:iteratedDeltaDistributive}
For any local sections $f,g\in \SO_X$ and any local section $D\in H$ we have
$$\delta_{gD}^{p-1}(gf)=g^p\delta_D^{p-1}(f)+\delta_{gD}^{p-1}(g)f.$$
\end{lemma}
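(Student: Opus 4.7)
The plan is to realize the asserted identity as the action on $f$ of a differential operator $T$ vanishing in the sheaf $\SD_{X/S}$ of crystalline differential operators on $X$ over $S$ (equivalently, the universal enveloping algebra of the restricted Lie algebroid $T_{X/S}$). The strategy is to prove $T\cdot \nu = 0$ via the Hochschild identity applied to $\mu = g\nu$, and then cancel $\nu$ on the right through a torsion-freeness argument analogous to the one used in Proposition \ref{prop:katzTangent}.

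Set $\nu = \delta_D$ and $\mu = \delta_{gD}$; by $\SO_X$-linearity of the anchor $\delta$ one has $\mu = g\nu$, and both are local sections of $T_{X/S}$. If $\nu = 0$ then $\mu = 0$ and both sides of the claimed identity vanish trivially, so I may assume $\nu \neq 0$. In $\SD_{X/S}$, consider
$$T := \mu^{p-1}\cdot g \;-\; g^p\cdot \nu^{p-1} \;-\; \mu^{p-1}(g),$$
where $g$ and $\mu^{p-1}(g)$ act as multiplication operators. Applying $T$ to a local section $f\in \SO_X$ returns precisely $\delta_{gD}^{p-1}(gf) - g^p\delta_D^{p-1}(f) - \delta_{gD}^{p-1}(g)f$, so the lemma is equivalent to the vanishing of $T$ in $\SD_{X/S}$.

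Composing $T$ on the right with $\nu$ would yield $T\cdot\nu = \mu^{p-1}\cdot (g\nu) - g^p\nu^p - \mu^{p-1}(g)\nu = \mu^p - g^p\nu^p - \mu^{p-1}(g)\nu$. Hochschild's identity (\cite[Lemma 1]{Hoch}) applied to the derivation $\mu = g\nu$ reads $\mu^p = g^p\nu^p + \mu^{p-1}(g)\nu$ in $\SD_{X/S}$, whence $T\cdot \nu = 0$.

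The main obstacle is to cancel $\nu$ on the right in the equation $T\cdot\nu=0$. The associated graded $\op{Gr}(\SD_{X/S}) \cong \op{Sym}^\bullet_{\SO_X}(T_{X/S})$ is locally a polynomial algebra over $\SO_X$ in which the nonzero section $\nu$ is not a zero divisor, by the torsion-freeness of $T_{X/S}$ invoked in the proof of Proposition \ref{prop:katzTangent}. A principal-symbol argument applied to $T\cdot\nu = 0$, iterated on the order of $T$, then forces $T = 0$ in $\SD_{X/S}$; evaluating at $f$ gives the asserted identity.
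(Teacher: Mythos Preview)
Your proof is correct and takes a genuinely different route from the paper's. The paper's argument exploits the restricted Lie algebroid structure on $H$ itself: working on the open set where $f$ is invertible, it sets $D' = D/f \in H$ and expands $((gf)D')^{[p]}$ in two ways using the axiom $(hE)^{[p]} = h^p E^{[p]} + \delta_{hE}^{p-1}(h)E$ --- once with $h = gf$, once as $h = g$ applied to $fD'$ followed by $h = f$ applied to $D'$ --- and then cancels the common coefficient of $D'$. By contrast, you observe that the identity depends only on the derivation $\nu = \delta_D$ (via the anchor) and never on the $[p]$-map of $H$, and you recast it as the vanishing of an order $\le p-1$ operator $T$ in $\SD_{X/S}$; Hochschild's identity gives $T\cdot\nu = 0$, and the domain property of $\op{Gr}(\SD_{X/S}) \cong \Sym^\bullet(T_{X/S})$ lets you cancel $\nu$ on the right.

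Your approach is arguably cleaner in that it isolates the identity as a statement purely about derivations, independent of any auxiliary $p$-structure on $H$; the paper's approach, on the other hand, stays entirely within the axiomatics of restricted Lie algebroids and avoids passing through the full ring $\SD_{X/S}$. Both arguments rely on the same kind of torsion-freeness to justify a cancellation (you cancel $\nu$ in $\SD_{X/S}$; the paper cancels $D'$ in $H$ after restricting to a dense open), so their implicit regularity hypotheses are comparable. One small point worth making explicit: the Hochschild identity you invoke must hold in $\SD_{X/S}$ itself, not merely in $\End_{\SO_S}(\SO_X)$ --- this is indeed the content of \cite[Lemma 4.3]{Langer} for the enveloping algebra of a Lie algebroid, or equivalently follows from Deligne's identity in the associative ring $\SD_{X/S}$ combined with Proposition~\ref{prop:katzTangent}, so your citation is on solid ground.
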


\begin{proof}
As it is an equality of local sections in $\SO_X$, it is enough to prove that the difference of the sections is zero on an open set. In particular, as the equality clearly holds if $f=0$, we can assume that $f \not= 0$
and restrict to the open subset where $f$ is invertible. 
Then $D'=D/f$ is an element of $H$ and we have the following two identities as a consequence of the $p$-structure on $H$
$$((gf)D')^{[p]} = g^pf^p(D')^{[p]}+\delta_{gfD'}^{p-1}(gf) D',$$
$$(g(fD'))^{[p]}=g^p (fD')^{[p]}+\delta_{gfD'}^{p-1}(g)(fD')=g^pf^p(D')^{[p]}+g^p\delta_{fD'}^{p-1}(f)D' + \delta_{gfD'}^{p-1}(g)fD'.$$
Subtracting and considering coefficients of $D'$ yields the equality
$$\delta_{gfD'}^{p-1}(gf) = g^p\delta_{fD'}^{p-1}(f)+\delta_{gfD'}^{p-1}(g)f.$$
Taking into account that $D=fD'$ we obtain the result.
\end{proof}

\begin{proposition} \label{resLambda}
Let $H$ be a restricted $\SO_S$-Lie algebroid on $X$ over $S$. Then 
the map $[p]:\SO_X\oplus H \longrightarrow \SO_X \oplus H$ defined by 
$$(f+D)^{[p]}=f^p+D^{[p]}+\delta_D^{p-1}(f) $$
is a $p$-structure for the universal enveloping algebra $\Lambda_H$ making the symbol map $\smb:(\Lambda_H)_1 \longrightarrow H$ a morphism 
of restricted $\SO_S$-Lie algebroids.
\end{proposition}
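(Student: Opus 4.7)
The plan is to verify the four axioms of Definition~\ref{defrestrictedlambda} for the map $[p]$ on $(\Lambda_H)_1 = \SO_X \oplus H$ given by the formula $(f+D)^{[p]} = f^p + D^{[p]} + \delta_D^{p-1}(f)$, and then read off the compatibility with the symbol map.

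Axiom~(4) is immediate: setting $D=0$ yields $f^{[p]} = f^p$. For axiom~(3), I would expand the left-hand side of $(g(f+D))^{[p]} = g^p(f+D)^{[p]} + \delta_{g(f+D)}^{p-1}(g)(f+D)$ via the defining formula as $(gf)^p + (gD)^{[p]} + \delta_{gD}^{p-1}(gf)$, apply the restricted Lie algebroid scaling identity $(gD)^{[p]} = g^p D^{[p]} + \delta_{gD}^{p-1}(g) D$ on $H$, and use $\delta_{g(f+D)} = \delta_{gD}$ (the anchor vanishes on $\SO_X$); the remaining equality is then precisely Lemma~\ref{lemma:iteratedDeltaDistributive}. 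For axiom~(2), I would similarly expand $((f_1+D_1)+(f_2+D_2))^{[p]}$, use $(f_1+f_2)^p = f_1^p + f_2^p$ and the additivity of $[p]$ on the restricted Lie algebra $H$, namely $(D_1+D_2)^{[p]} = D_1^{[p]} + D_2^{[p]} + \sum_{i=1}^{p-1} s_i(D_1,D_2)$, and compare with $(f_1+D_1)^{[p]} + (f_2+D_2)^{[p]} + \sum_{i=1}^{p-1} s_i(f_1+D_1,f_2+D_2)$; the surviving identity is exactly Lemma~\ref{lemma:inducedAditivity}.

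Axiom~(1) is the most delicate, so I would treat it last. Using Jacobson's identity $\ad(X)^p = \ad(X^p)$ in the associative algebra $\Lambda_H$, the condition $\ad((f+D)^{[p]}) = \ad(f+D)^p$ becomes equivalent to showing that $(f+D)^p - (f+D)^{[p]}$ is central in $\Lambda_H$. Expanding $(f+D)^p$ via Jacobson's formula and Lemma~\ref{lemma:extensionOrder0} gives $(f+D)^p = f^p + D^p + \delta_D^{p-1}(f)$, so the difference collapses to $D^p - D^{[p]}$. To verify centrality on the generating set $(\Lambda_H)_1$, I would note that for $D' \in H$ one has $\ad(D^p - D^{[p]})(D') = \ad(D)^p(D') - \ad(D^{[p]})(D') = 0$ by the defining identity of the restricted Lie algebra $H$, while for $f \in \SO_X$ one has $\ad(D^p - D^{[p]})(f) = \delta_D^p(f) - \delta_{D^{[p]}}(f) = 0$ because the anchor of a restricted Lie algebroid is a morphism of restricted Lie algebras. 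Since $\Lambda_H$ is generated by $(\Lambda_H)_1$, centrality follows.

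The symbol map claim is then immediate from the construction: by definition $\smb((f+D)^{[p]}) = \smb(f^p + D^{[p]} + \delta_D^{p-1}(f)) = D^{[p]} = \smb(f+D)^{[p]}$, since the $\SO_X$-valued terms lie in $\ker(\smb)$; combined with the tautological compatibility of $\smb$ with bracket and anchor built into the construction of $\Lambda_H$, this makes $\smb$ a morphism of restricted $\SO_S$-Lie algebroids. The main obstacle is axiom~(1): the other verifications are essentially bookkeeping once the two preliminary lemmas are in hand, whereas the centrality of $D^p - D^{[p]}$ genuinely uses the compatibility of the anchor with the $p$-operation, which is the only piece of input not coming from the formula defining $[p]$ itself.
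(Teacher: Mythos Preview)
Your proposal is correct and follows essentially the same route as the paper: both check the four axioms of Definition~\ref{defrestrictedlambda} directly, reducing axiom~(2) to Lemma~\ref{lemma:inducedAditivity}, axiom~(3) to Lemma~\ref{lemma:iteratedDeltaDistributive}, and handling axiom~(1) via Jacobson's identity together with Lemma~\ref{lemma:extensionOrder0}. Your treatment of axiom~(1) is in fact more explicit than the paper's: where the paper silently replaces $\ad(D^p)$ by $\ad(D^{[p]})$ in the line $\ad(f+D)^p = \ad(f^p)+\ad(D^{[p]})+\ad(\delta_D^{p-1}(f))$, you unpack this as the centrality of $D^p - D^{[p]}$ and verify it separately on $H$ (restricted Lie algebra axiom) and on $\SO_X$ (compatibility $\delta_{D^{[p]}} = \delta_D^p$ of the anchor with the $p$-map).
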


\begin{proof}
It will be enough to check the four properties of Definition \ref{defrestrictedlambda}.
\begin{enumerate}
    \item
By Jacobson's formula in $\Lambda_H$ and by  Lemma \ref{lemma:extensionOrder0} we have the equality
$$(f+D)^p=f^p+D^p+\delta_D^{p-1}(f).$$
So
$$\ad(f+D)^p=\ad(f^p)+\ad(D^{[p]})+\ad(\delta_D^{p-1}(f)) = \ad((f+D)^{[p]}).$$
\item 
To prove additivity we use Lemma \ref{lemma:inducedAditivity} to obtain
\begin{eqnarray*}
& & (f_1+D_1+f_2+D_2)^{[p]} \\ 
& = &((f_1+f_2)+(D_1+D_2))^{[p]} = f_1^p+f_2^p+(D_1+D_2)^{[p]} + \delta_{D_1+D_2}^{p-1}(f_1+f_2) \\
& = & f_1^p+f_2^p+D_1^{[p]}+D_2^{[p]} +\sum_{i=1}^{p-1} s_i(D_1,D_2)+\delta_{D_1+D_2}^{p-1}(f_1 + f_2) \\
& = & f_1^p+f_2^p+D_1^{[p]}+D_2^{[p]} +  \delta_{D_1}^{p-1}(f_1)+\delta_{D_2}^{p-1}(f_2)+\sum_{i=1}^{p-1} s_i(f_1+D_1,f_2+D_2)\\
& =  & (f_1+D_1)^{[p]}+ (f_2+D_2)^{[p]} + \sum_{i=1}^{p-1}s_i(f_1+D_1,f_2+D_2).
\end{eqnarray*}
\item 
Let $f,g\in \SO_X$ and $D\in H$. Then by Lemma \ref{lemma:iteratedDeltaDistributive} we have
\begin{eqnarray*}
 & & (g(f+D))^{[p]} \\
& = & (gf+gD)^{[p]} = g^pf^p+(gD)^{[p]}+\delta_{gD}^{p-1}(gf)\\
& = & g^pf^p+g^pD^{[p]} + \delta_{gD}^{p-1}(g)D+\delta_{gD} ^{p-1}(gf)\\
& = & g^pf^p+g^pD^{[p]} + \delta_{gD}^{p-1}(g)D+g^p\delta_D^{p-1}(f)+\delta_{gD}^{p-1}(g)f\\
& = & g^p(f+D)^{[p]} + \delta_{gD}^{p-1}(g)(f+D) = g^p(f+D)^{[p]}+\delta_{g(f+D)}^{p-1}(g)(f+D).
\end{eqnarray*}
\item This property is obvious by taking $D= 0$.
\end{enumerate}
\end{proof}

To summarize, we have shown that the definition of a $p$-structure on the universal enveloping algebra $\Lambda_H$ of a restricted 
$\SO_S$-Lie algebroid $H$, as well as the usual notion of $p$-th power for crystalline differential operators are particular cases of 
our general definition of a $p$-structure for a restricted sheaf of rings of differential operators (Definition \ref{defrestrictedlambda}).

\section{Some examples of restricted sheaves of rings of differential operators}

In this section we assume that $\pi : X \to S$ is a smooth morphism.

\subsection{Sheaf of crystalline differential operators $\SD_{X/S}$} \label{sheafofcrystallinediffop}

The sheaf of crystalline differential operators  (see e.g. \cite{BMR}) 
$$  \Lambda^{dR} = \SD_{X/S} $$
is a split almost polynomial restricted sheaf of rings of differential operators. Its associated restricted 
$\SO_S$-Lie algebroid $\Lambda^{dR}_1 / \Lambda^{dR}_0$ is the relative tangent sheaf
$T_{X/S}$, taking the commutator as the Lie bracket of vector fields and taking the identity as the anchor map. 
The $\SD_{X/S}$-modules correspond to coherent $\SO_X$-modules with a relative integrable connection.

For every derivation $\nu\in T_{X/S}$ the $p$-th power $\nu^p$ is again a derivation, since by applying Leibniz rule, we have for every 
local section $f,g\in \SO_X$
$$\nu^p(fg)=\sum_{k=0}^p \binom{p}{k} \nu^k(f) \nu^{p-k}(g) = \nu^p(f) g + f \nu^p(g)$$
so taking $\nu^{[p]}=\nu^p$ gives us a $p$-structure  $[p]:T_{X/S} \to T_{X/S}$ endowing $T_{X/S}$ with the structure of a 
restricted $\SO_S$-Lie algebroid $(T_{X/S},[-,-], \mathrm{id}_{T_{X/S}},[p])$ and, therefore, inducing a $p$-structure on $\SD_X$.

\subsection{Trivial $p$-structure on the symmetric algebra} \label{trivialpstructure}

Given a locally free $\SO_X$-module $H$ over $X$, the symmetric algebra
$$ \Lambda^{\op{Higgs}} = \Sym^\bullet(H)$$
is a split almost polynomial restricted sheaf of rings of differential operators, when taking the trivial $p$-stucture on 
$\Lambda_1 = \SO_X \oplus H$, i.e. we take $[p]:H\to H$ to be the zero map on H
$$D^{[p]}=0.$$
Then a $\Lambda^{\op{Higgs}}$-module corresponds to a $H^\vee$-valued Higgs bundle $(E,\phi)$, where $E$ 
is a vector bundle over $X$ and $\phi:E\to E\otimes H^\vee$ is a morphism of $\SO_X$-modules satisfying 
$\phi \wedge \phi=0$.

As $\Lambda^{\op{Higgs}}$ is abelian, we have
$$\ad_{\Lambda_1}(D)^p =0 = \ad_{\Lambda_1}(D^{[p]}).$$
Moreover $s_i(D_1,D_2)=0$ for all $D_1,D_2\in H$, so
$$(D_1+D_2)^{[p]}=0=D_1^{[p]}+D_2^{[p]} =D_1^{[p]}+D_2^{[p]} + \sum_{i=1}^{p-1} s_i(D_1,D_2).$$
Finally, $\Lambda^{\op{Higgs}}$ being abelian implies $\delta=0$, so we trivially have
$$0 = (fD)^{[p]} =f^pD^{[p]}+\delta_{fD}^p(f)D = 0.$$

\subsection{$p$-structure on the reduction to the associated graded of $\SD_{X/S}$}

By the classical Rees construction applied to the filtered sheaf $\Lambda^{dR} = \SD_{X/S}$ (see Subsection 4.1) we obtain a sheaf of rings 
over $X \times \mathrm{Spec}(\mathbb{K}[t]) = X \times \mathbb{A}^1$ defined as
$$ \Lambda^{dR,R} = \bigoplus_{i \geq 0} t^i \Lambda_i,$$
where $t$ acts by multiplication with $t$ on  $\Lambda^{dR,R}$ using the inclusions $\Lambda_i \subset \Lambda_{i+1}$.
Then by construction the fibers over the closed points $0$ and $1$ of $\mathbb{A}^1$ equal 
$$ (\Lambda^{dR,R})_0 = \Sym^\bullet (T_{X/S}) \quad   \text{and}  \quad   (\Lambda^{dR,R})_1 = \mathcal{D}_{X/S} = \Lambda^{dR}.$$
We observe that $\Lambda^{\op{dR},R}$ is a split almost polynomial sheaf of rings of differential operators 
on $X \times \mathbb{A}^1$ relative to $S \times \mathbb{A}^1$ such that the fiber over each $\lambda \in \mathbb{A}^1$ 
corresponds to the universal enveloping algebra of the $\SO_S$-Lie algebroid $(T_{X/S}, \lambda [-,-],\lambda \mathrm{id}_{T_{X/S}})$. 

\bigskip
We can endow $\Lambda^{dR,R}$ with a $p$-structure as follows. We note that
$$\Lambda^{dR,R}_1 = \SO_{X \times \mathbb{A}^1} \oplus T_{X \times \mathbb{A}^1 / S \times \mathbb{A}^1} \quad
\text{and} \quad  T_{X \times \mathbb{A}^1 / S \times \mathbb{A}^1}  =  
T_{X/S}.$$
Then the $p$-structure on $\Lambda^{dR,R}_1$ over $X \times \mathbb{A}^1$ is defined by
$$ [p]^R: T_{X/S} \to  T_{X/S} \quad \quad D^{[p]^R} = t^{p-1} D^{[p]},  $$
where $t$ is the coordinate on $\mathbb{A}^1$ and $D^{[p]}$ is the $p$-th power of the relative vector field
$D \in T_{X/S}$. 
By construction of $\Lambda^{\op{dR},R}$ the commutator of elements in $\Lambda^{\op{dR},R}_1$ is the commutator of differential operators multiplied by the coordinate $t$, i.e., for every $D\in \Lambda^{\op{dR},R}_1$
$$\ad_{\Lambda^{\op{dR},R}_1}(D)= t \ad_{\Lambda^{dR}_1}(D)$$
Moreover, as the Lie polynomials $s_i(x,y)$ are homogeneous of degree $p-1$, we have
$$s_i^{\Lambda^{\op{dR},R}}(x,y)= t^{p-1}s_i^{\Lambda^{dR}}(x,y).$$
Therefore, the following equalities hold for any local sections $D \in T_{X/S}$ and 
$f \in \SO_{X \times \mathbb{A}^1}$

$$\ad_{\Lambda_1^{\op{dR,R}}}(D^{[p]^R})= t \ad_{\Lambda^{dR}_1}(t D^{[p]})=
t^p \ad_{\Lambda^{dR}_1}(D)^p  = (t \ad_{\Lambda^{dR}_1}(D))^p = \ad_{\Lambda_1^{\op{dR},R}}(D)^p,$$
\begin{multline*}
(D_1+D_2)^{[p]^R}= t^{p-1}(D_1+D_2)^{[p]} = t^{p-1}D_1^{[p]}+ t^{p-1}D_1^{[p]}+
\sum_{i=1}^{p-1} t^{p-1}s_i^{\Lambda^{dR}}(D_1,D_2)\\
=D_1^{[p]^R}+D_2^{[p]^R}+
\sum_{i=1}^{p-1}s_i^{\Lambda^{\op{dR},R}}(D_1,D_2),
\end{multline*}

\begin{multline*}
(fD)^{[p]^R}= t^{p-1}(fD)^{[p]}=t^{p-1}f^pD^{[p]} + t^{p-1}\left(\delta_{fD}^{\Lambda^{dR}} \right)^{p-1}(f)D \\ 
= f^pD^{[p]^R}+\left(\delta_{fD}^{\Lambda^{\op{dR},R}} \right)^{p-1}(f)D.
\end{multline*}

This proves that $[p]^R$ is a $p$-structure for $\Lambda^{\op{dR},R}$. 

\bigskip

\subsection{$p$-structure on the reduction to the associated graded: general case} \label{redgrad}

More generally, let $\Lambda = \Lambda_H$ be the restricted sheaf of rings of differential operators over $X$ given as the 
universal enveloping algebra of a restricted $\SO_S$-Lie algebroid $(H,[-,-],\delta, [p])$ --- see Proposition \ref{resLambda}. 
Consider the Rees construction $\Lambda^R$ over $X\times \mathbb{A}^1$ relative to 
$S \times \mathbb{A}^1$ of the filtered sheaf of rings $\Lambda$. Then the fiber of $\Lambda^R$ over 
$\lambda \in \mathbb{A}^1$ 
is the universal enveloping algebra of the $\SO_S$-Lie algebroid $(H,\lambda[-,-], 
 \lambda \delta)$. We also note that $\Lambda_1^R / \Lambda_0^R = p_X^*(H)$, where $p_X : X \times \mathbb{A}^1 \rightarrow X$
is the projection onto $X$. The anchor map $\delta^R$ of $\Lambda^R$ equals
$$ \delta^R = t \delta : \Lambda_1^R / \Lambda_0^R = p_X^*(H) \rightarrow p_X^*(T_{X/S}).$$
Then the previous argument proves that the map $[p]^R : p_X^*(H)  \to p_X^*(H)$  over $X \times \mathbb{A}^1$ 
 given by
$$D^{[p]^R}= t^{p-1} D^{[p]}$$
is a $p$-structure for $\Lambda^R$. This also yields an explicit deformation 
of the $p$-structure on $\Lambda$ to the trivial 
$p$-structure on $\Gr^\bullet(\Lambda) \cong \Sym^\bullet(H)$.

\bigskip

\subsection{$p$-structure on the Atiyah algebroid of a line bundle} \label{twisteddiffop}

Let us study an example which is almost polynomial, but not split.
Let $L$ be a line bundle on $X$ and take $\Lambda$ to be the sheaf of crystalline differential operators on $L$, i.e., 
the subalgebra 
$$\Lambda = \mathcal{D}_{X/S}(L) \subset \End_{\SO_S}(L)$$
generated by the relative Atiyah algebroid $\op{At}_{X/S}(L)=\op{Diff}_{\SO_S}^1(L,L)$. Note that
$$\Lambda_1 = \op{At}_{X/S}(L).$$
Local sections of $\op{At}_{X/S}(L)$ can be identified with local sections $D \in \End_{\pi^{-1}(\SO_S)}(L)$ 
such that for each $f \in \SO_X$, $[D,f]\in \End_{\SO_X}(L)= \SO_X = \SD^0(L)$.
Then, for every $D\in \op{At}_{X/S}(L)$ let us denote by $\delta_D:\SO_X\to \SO_X$ the map
$$\delta_D(f)=[D,f]\in \SO_X.$$
Observe that, as $\Lambda$ is associative, we have that for each $f,g\in \SO_X$
\begin{eqnarray*}
\delta_D(fg)=[D,fg] & = & Dfg-fgD = Dfg-fDg+fDg-fgD \\
                    & = & [D,f]g+f[D,g] = \delta_D(f)g+f\delta_D(g)
\end{eqnarray*}
thus, $\delta_D$ is a $\SO_S$-derivation and we can consider the map  $\delta: \op{At}_{X/S}(L) \longrightarrow T_{X/S}$. 
So we obtain the short exact sequence
\begin{equation} \label{atiyahalgebra}
0\longrightarrow \SO_X \longrightarrow \op{At}_{X/S}(L)  \stackrel{\delta}{\longrightarrow} T_{X/S} \longrightarrow 0.
\end{equation}
Thus the triple $(\op{At}_{X/S}(L), [-,-], \delta)$ becomes a $\SO_S$-Lie algebroid. We will now endow this Lie algebroid with a 
$p$-structure.

\begin{lemma}
Let $D\in \op{At}_{X/S}(L)$. Then for every $f\in \SO_X$, $[D^p,f]\in \SO_X$, so $D^p$ can be identified with an element in $\op{At}_{X/S}(L)$ 
that we will denote as $D^{[p]}$.
\end{lemma}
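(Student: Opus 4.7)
The plan is to use Jacobson's identity in the associative algebra $\End_{\pi^{-1}(\SO_S)}(L)$, namely that in any associative algebra of characteristic $p$ one has $\ad(D)^p = \ad(D^p)$. Since $\Lambda = \SD_{X/S}(L)$ sits inside $\End_{\pi^{-1}(\SO_S)}(L)$, and the latter is an associative $\pi^{-1}(\SO_S)$-algebra of characteristic $p$, this identity applies to every local section $D \in \op{At}_{X/S}(L) \subset \End_{\pi^{-1}(\SO_S)}(L)$. Note also that $D^p$ is automatically $\pi^{-1}(\SO_S)$-linear, as a composition of $\pi^{-1}(\SO_S)$-linear maps.

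With that in hand, the verification is immediate. Given $f \in \SO_X$, viewed as the multiplication endomorphism of $L$, I would write
\[
[D^p, f] = \ad(D^p)(f) = \ad(D)^p(f).
\]
The defining property of $\op{At}_{X/S}(L)$ says $\ad(D)(f) = [D, f] = \delta_D(f) \in \SO_X$, so $\ad(D)$ preserves the subsheaf $\SO_X \subset \End_{\pi^{-1}(\SO_S)}(L)$ and restricts there to the derivation $\delta_D$. By induction on $k$, we have $\ad(D)^k(f) = \delta_D^k(f) \in \SO_X$ for every $k \geq 1$. Taking $k = p$ gives
\[
[D^p, f] = \delta_D^p(f) \in \SO_X,
\]
as required. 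Hence $D^p$ satisfies the characterizing condition for belonging to $\op{At}_{X/S}(L)$, and we can unambiguously set $D^{[p]} := D^p$ as an element of $\op{At}_{X/S}(L) = \Lambda_1$.

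There is no real obstacle here: the whole argument is the application of Jacobson's identity together with the fact that $\delta_D$ is an honest derivation of $\SO_X$, so iterating it stays inside $\SO_X$. The only point worth flagging is the consistency of the notation $D^{[p]}$ with Definition~\ref{defrestrictedlambda}; the subsequent verifications of axioms (1)--(4) of a $p$-structure (which will be carried out after this lemma) will rely precisely on the identification $D^{[p]} = D^p$ in the ambient associative algebra $\End_{\pi^{-1}(\SO_S)}(L)$, together with the computation $[D^p, f] = \delta_D^p(f)$ just established.
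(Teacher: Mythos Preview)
Your proof is correct and follows essentially the same approach as the paper: apply Jacobson's identity $\ad(D^p)=\ad(D)^p$ in the ambient associative algebra of characteristic $p$, then observe that $\ad(D)^p(f)=\delta_D^p(f)\in\SO_X$. The paper compresses this into a single displayed line, while you spell out the induction $\ad(D)^k(f)=\delta_D^k(f)$ and the ambient algebra more explicitly, but there is no substantive difference.
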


\begin{proof}
As $\Lambda$ is an associative $\SO_X$-algebra of characteristic $p$ we can apply Jacobson's formula and we have that for every $D\in \op{At}_{X/S}(L)$ 
and every $f\in \SO_X$
$$[D^p,f]=\ad(D^p)(f) = \ad(D)^p(f) = \delta_D^p(f) \in \SO_X.$$
\end{proof}

\begin{proposition}
The map $[p]: \op{At}_{X/S}(L) \to \op{At}_{X/S}(L)$ described in the previous lemma is a $p$-structure for $\Lambda$.
\end{proposition}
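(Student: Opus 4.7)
The plan is to verify the four axioms (1)--(4) of Definition \ref{defrestrictedlambda}. Since $D^{[p]}$ is defined as the ordinary $p$-th power $D^p$ computed inside the associative $\SO_S$-algebra $\Lambda = \mathcal{D}_{X/S}(L)$, axioms (1), (2), and (4) will follow almost directly from standard identities in associative algebras of characteristic $p$, while axiom (3) will require combining Deligne's identity with Proposition \ref{prop:katzTangent}.

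First, axiom (4) is immediate because the inclusion $\SO_X \hookrightarrow \Lambda$ realizes $f$ as multiplication by $f$ on $L$, so its $p$-th power as an operator is multiplication by $f^p$. Axiom (1) follows from Jacobson's identity $\ad(D^p) = \ad(D)^p$, valid in any associative algebra of characteristic $p$. Axiom (2) is a direct consequence of Jacobson's formula $(D_1+D_2)^p = D_1^p + D_2^p + \sum_{i=1}^{p-1} s_i(D_1,D_2)$ in the ring $\Lambda(U)$, since by definition $D^{[p]} = D^p$ and the universal Lie polynomials $s_i$ are the same as those appearing in Definition \ref{defrestrictedlambda}.

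The substantive step is axiom (3). I would apply Deligne's identity (\cite[Proposition 5.3]{Katz70}) in the associative ring $\Lambda(U)$, just as in the proof of Proposition \ref{prop:iota-p-linear}(b), obtaining
$$(fD)^p = f^p D^p + f\, \ad(D)^{p-1}(f^{p-1})\, D = f^p D^p + f\, \delta_D^{p-1}(f^{p-1})\, D.$$
Rewriting this with $(fD)^{[p]} = (fD)^p$ and $D^{[p]} = D^p$ gives $(fD)^{[p]} = f^p D^{[p]} + f \delta_D^{p-1}(f^{p-1}) D$. To match the prescribed form $f^p D^{[p]} + \delta_{fD}^{p-1}(f) D$, it suffices to invoke the identity $\delta_{fD}^{p-1}(f) = f \delta_D^{p-1}(f^{p-1})$ from Proposition \ref{prop:katzTangent}; its proof depended only on $\delta_D$ being an $\SO_S$-derivation of $\SO_X$ and on $\delta$ being $\SO_X$-linear, both of which are already established for the anchor $\delta:\op{At}_{X/S}(L) \to T_{X/S}$ coming from the exact sequence (\ref{atiyahalgebra}).

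The main potential obstacle is justifying that Deligne's formula, usually stated for genuine derivations of a commutative ring, applies to $fD$ regarded as an element of the noncommutative algebra $\Lambda \subset \End_{\pi^{-1}(\SO_S)}(L)$. This is legitimate because Deligne's identity is, at bottom, a purely associative-algebra statement whose only hypothesis is that $[D,f] \in \SO_X$ lie in the commutative subring over which one is computing. This hypothesis is precisely what defines $\op{At}_{X/S}(L)$, so Deligne's proof transfers verbatim to $\Lambda(U)$ and completes the verification.
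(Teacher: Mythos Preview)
Your proposal is correct and follows essentially the same route as the paper: verify axioms (1), (2), (4) directly from Jacobson's identity/formula and the definition $D^{[p]}=D^p$, and derive axiom (3) by applying Deligne's identity in $\Lambda(U)$ together with Proposition \ref{prop:katzTangent}. The only cosmetic difference is that the paper credits axiom (1) to the preceding lemma (which already computed $\ad(D^p)(f)=\ad(D)^p(f)$), whereas you invoke Jacobson's identity directly; your added justification for why Deligne's identity applies in $\Lambda$ is a welcome clarification but not a departure in method.
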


\begin{proof}
Property (1) was proved in the previous lemma. 
For the additivity property (2), observe that in $\Lambda$ Jacobson's formula yields
$$(D_1+D_2)^p = D_1^p+D_2^p+\sum_{i=1}^{p-1}s_i(D_1,D_2).$$
As this is indeed an equality in the $\SO_X$-algebra $\Lambda$, the commutator of the left and right side of the equation with an element of $\SO_X$ must yield the same element of $\SO_X$, so both left and right sides remain equal under the identification of $D_i^p$ with the 
corresponding element $D_i^{[p]}\in \op{At}_{X/S}(L)$.
For (3), since $\Lambda$ is associative, we can apply Deligne's identity \cite[Proposition 5.3]{Katz70} 
and we obtain that
$$(fD)^p=f^pD^p+f \ad(D)^{p-1}(f^{p-1}) D = f^pD^p+ f \delta_D^{p-1}(f^{p-1}) D.$$
Now, applying Proposition \ref{prop:katzTangent} we have that
$$f^pD^p+ f \delta_D^{p-1}(f^{p-1}) D=f^pD^p+\delta_{fD}^{p-1}(f)D$$
and, applying a similar argument to the previous property, we obtain the desired equality.
Finally, it is trivial by construction that for every $f\in \SO_X$, $f^{[p]}=f^p$.
\end{proof}

Finally, we mention that $\Lambda = \mathcal{D}_{X/S}(L)$ coincides with the Sridharan enveloping 
algebra $\Lambda_{\op{At}_{X/S}(L)}$ associated to the non-split extension \eqref{atiyahalgebra} of the Lie algebroid $T_{X/S}$ by $\SO_X$ as constructed in \cite[Section 4.3]{To} (see also \cite[Example 3.2.3]{ToPhD} for this particular case) or \cite[page 516]{Langer}.

\bigskip

\subsection{$p$-structures on the symmetric algebra} \label{pstrsymmalg}

Returning to the abelian setting, let us fix $\Lambda=\Sym^\bullet (H)$ for some locally free $\SO_X$-module 
$H$ and let us study the possible $p$-structures on $\Lambda$. As before, $\Lambda$ being abelian 
implies that for any $D \in H$
$$\ad_{\Lambda_1}(D)^p =0 = \ad_{\Lambda_1}(D^{[p]})$$
and for any $D_1,D_2\in H$, $s_i(D_1,D_2)=0$. Moreover, for any $D\in H$, $\delta_D=0$. 
Therefore, the conditions for a map $[p]:H\longrightarrow H$ to endow $\Lambda$ 
with a $p$-structure are the following
\begin{enumerate}
\item $(D_1+D_2)^{[p]}=D_1^{[p]}+D_2^{[p]}$,
\item $(fD)^{[p]}=f^pD^{[p]}$.
\end{enumerate}
So a $p$-structure on $\Sym^\bullet(H)$ is given by a $p$-linear map from $H$ to $H$,
or equivalently by an $\SO_X$-linear map 
$$ \alpha : F^*H \to H,$$
where $F$ denotes the absolute Frobenius of $X$.

%In particular, let $H$ be a $(p-1)$-torsion point in the Jacobian of $X$, i.e., a line bundle such that $H^{\otimes (p-1)}\cong \SO_X$. Then there is an %isomorphism $\Fr^*H\cong H^{\otimes p}\to H$ defining a $p$-lie structure on $\Sym^\bullet(H)$.  

%In fact, this example reflects a more general fact about the set of admissible $p$-Lie structures on a fixed $\Lambda$.

\subsection{Classification of $p$-structures on a general $\Lambda$}

In this subsection we will describe all $p$-structures on a given sheaf of rings of differential operators $\Lambda$.

\begin{proposition}
\label{prop:classpStr}
Let $[p]: \Lambda_1 \to \Lambda_1$ be a $p$-structure for $\Lambda$. Then any other $p$-structure $[p]':\Lambda_1 \to \Lambda_1$ is given by
$$[p]'=[p]+\varphi\circ \smb$$
where $\varphi:H \longrightarrow Z(\Lambda_1)$ is a $p$-linear map from $H=\Lambda_1/\Lambda_0$ to the centralizer 
$Z(\Lambda_1)$ of $\Lambda_1$ in $\Lambda_1$.
\end{proposition}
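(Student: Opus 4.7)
The plan is to define the candidate discrepancy map $\varphi : \Lambda_1 \to \Lambda_1$ by $\varphi(D) := D^{[p]'} - D^{[p]}$ and show that $\varphi$ descends to a $p$-linear map $\bar\varphi : H \to Z(\Lambda_1)$; conversely, one checks that any such $p$-linear map produces a new $p$-structure by the formula in the statement. The guiding principle is that the symbol map $\smb$, the anchor map $\delta$, and the universal Lie polynomials $s_i(-,-)$ are determined entirely by the underlying associative $\SO_S$-algebra structure of $\Lambda$, which is fixed, and therefore do not depend on the choice of $p$-structure. Subtracting the axioms of Definition \ref{defrestrictedlambda} applied to $[p]'$ and to $[p]$ will cause every term involving these data to cancel in pairs, leaving only linear conditions on $\varphi$.

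Carrying this out, axiom (4) gives $\varphi(f) = f^p - f^p = 0$ for all $f \in \Lambda_0 = \SO_X$, and subtracting axiom (2) applied to a pair $(D,f)$ with $f \in \SO_X$ yields $\varphi(D+f) = \varphi(D)$, so $\varphi$ factors through $\smb$ as a map $\bar\varphi : H \to \Lambda_1$. Subtracting axiom (2) for two arbitrary sections of $\Lambda_1$ then gives additivity of $\varphi$, and subtracting axiom (3) gives $\varphi(fD) = f^p\,\varphi(D)$ since the common anchor term $\delta_{fD}^{p-1}(f)D$ cancels; hence $\bar\varphi$ is $p$-linear. Finally, axiom (1) forces $\ad(D^{[p]'}) = \ad(D)^p = \ad(D^{[p]})$, so that $\ad(\varphi(D))$ vanishes on $\Lambda_1$; since $\varphi(D) \in \Lambda_1$ by construction, this means $\bar\varphi$ takes values in the centralizer $Z(\Lambda_1)$ of $\Lambda_1$ in itself.

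For the converse direction I would take an arbitrary $p$-linear map $\varphi : H \to Z(\Lambda_1)$ and verify directly that $[p]' := [p] + \varphi \circ \smb$ satisfies the four axioms of Definition \ref{defrestrictedlambda}. Centrality of the image of $\varphi$ makes axiom (1) reduce to the corresponding axiom for $[p]$; additivity of $\varphi$ together with the unchanged $s_i$-terms gives axiom (2); the $p$-semilinearity of $\varphi$ together with the unchanged anchor contribution gives axiom (3); and axiom (4) is immediate because $\smb$ vanishes on $\SO_X$. I do not expect any genuine obstacle in this argument: the only care required is to track which terms in the axioms are intrinsic to the associative algebra $\Lambda$ (hence cancel in the difference) and which are intrinsic to the $p$-structure (hence give the relations defining a $p$-linear map into the centralizer). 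The whole proposition is a direct expression of the principle that the difference between two $p$-structures on a fixed associative algebra must be additive and $p$-semilinear in its argument and central in its output.
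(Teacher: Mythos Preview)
Your proposal is correct and follows essentially the same approach as the paper: define the difference $\varphi$ of the two $p$-structures, subtract the four axioms of Definition~\ref{defrestrictedlambda} so that the intrinsic algebra data (the $s_i$-terms, the anchor term $\delta_{fD}^{p-1}(f)D$, and $\ad(D)^p$) cancel, leaving precisely the conditions that $\varphi$ is additive, $p$-semilinear, kills $\SO_X$, and lands in $Z(\Lambda_1)$; then verify the converse directly. The paper's proof differs only in the sign convention for $\varphi$ and the order in which the properties are checked.
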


\begin{proof}
We put $\varphi(D)=D^{[p]}-D^{[p]'}$. Then for every local sections $D,E\in \Lambda_1$ we have
$$\ad(\varphi(D))(E)=\ad(D^{[p]})(E)-\ad(D^{[p]'})(E)=\ad(D)^p(E)-\ad(D)^p(E)=0.$$
So $\varphi(D)\in Z(\Lambda_1)$ for every $D\in \Lambda_1$. Let $D_1,D_2\in \Lambda_1$. Then
$$\varphi(D_1+D_2)=D_1^{[p]}+D_2^{[p]}+\sum_{i=0}^{p-1}s_i(D_1,D_2)-D_1^{[p]'}-D_2^{[p]'}-\sum_{i=0}^{p-1}s_i(D_1,D_2)=\varphi(D_1)+\varphi(D_2).$$
Similarly
$$\varphi(fD)=f^pD^{[p]} + \delta_{fD}^{p-1}(f) D - f^pD^{[p]'}- \delta_{fD}^{p-1}(f)D = f^p\varphi(D).$$
So $\varphi:\Lambda_1\to \Lambda_1$ is $p$-linear. Moreover, clearly
$$\varphi(f)=f^{[p]}-f^{[p]'}=f^p-f^p=0.$$
So $\varphi$ factors through the quotient $\varphi:H \longrightarrow Z(\Lambda_1)$.

Conversely, let $[p]:\Lambda_1\to \Lambda_1$ be a $p$-structure on $\Lambda$ 
and let $\varphi:H\to Z(\Lambda_1)$ be a $p$-linear map. We then define $D^{[p]'}=D^{[p]}+\varphi(\smb(D))$. 
Then for every local section $D_1,D_2,D,E\in \Lambda_1$ and every local section $f\in \SO_X$
$$\ad(D^{[p]'})(E)=\ad(D^{[p]})+\ad(\varphi(\smb(D)))(E) = \ad(D)^p(E),$$
\begin{multline*}
(D_1+D_2)^{[p]'}=(D_1+D_2)^{[p]}+\varphi(\smb(D_1)+\smb(D_2))\\
=D_1^{[p]}+D_2^{[p]}+\sum_{i=0}^{p-1}s_i(D_1,D_2)+\varphi(\smb(D_1))+\varphi(\smb(D_2))
=D_1^{[p]'}+D_2^{[p]'}+\sum_{i=0}^{p-1}s_i(D_1,D_2)
\end{multline*}
\begin{multline*}
(fD)^{[p]'}=(fD)^{[p]}+\varphi(\smb(fD))=f^pD^{[p]}+\delta_{fD}^{p-1}(f)D + f^p\varphi(\smb(D))  \\ 
= f^pD^{[p]'}+\delta_{fD}^{p-1}(f)D,
\end{multline*}
$$f^{[p]'}=f^{[p]}+\varphi(\smb(f))=f^p+\varphi(0)=f^p.$$
So $[p]':\Lambda_1\to \Lambda_1$ induces a  $p$-structure on $\Lambda$.
\end{proof}

\begin{corollary}
The $p$-structures on $\Lambda^{dR} = \SD_{X/S}$ are classified by global $1$-forms 
$\omega\in H^0(F^*\Omega^1_{X/S})$ and are given by
$$(f+v)^{[p]'} = f^p+v^{[p]} + v^{p-1}(f) + \omega(F^*v)$$
for $f \in \SO_X$ and $v \in T_{X/S}$,
where $[p]:T_{X/S} \to T_{X/S}$ denotes the canonical $p$-structure on the relative tangent bundle given by the 
$p$-th power of vector fields.
\end{corollary}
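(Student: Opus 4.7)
The plan is to specialize Proposition~\ref{prop:classpStr} to $\Lambda = \SD_{X/S}$, using the canonical $p$-structure from Subsection~\ref{sheafofcrystallinediffop} as a base point. The proposition then says that every other $p$-structure $[p]'$ is obtained as $[p]' = [p] + \varphi \circ \smb$ for a unique $p$-linear map $\varphi : T_{X/S} \to Z(\Lambda_1)$, so the proof reduces to two tasks: compute $Z(\Lambda_1)$ explicitly, and parametrize the set of such $\varphi$.

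First I would compute $Z(\Lambda_1)$. Using the decomposition $\Lambda_1 = \SO_X \oplus T_{X/S}$ and the commutators $[v,g] = v(g)$ and $[f,w] = -w(f)$ in $\SD_{X/S}$, together with the Lie bracket on $T_{X/S}$, the equation $[f+v, g+w] = 0$ for all $g+w \in \Lambda_1$ forces first $v(g) = 0$ for every $g$ (hence $v = 0$) and then $w(f) = 0$ for every $w \in T_{X/S}$. Thus $Z(\Lambda_1) = \ker\bigl(d_{X/S} : \SO_X \to \Omega^1_{X/S}\bigr)$, which in characteristic $p$ is the subsheaf of relative $p$-th powers inside $\SO_X$.

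Next I would identify the space of admissible $p$-linear maps $\varphi : T_{X/S} \to Z(\Lambda_1)$ with $H^0(F^*\Omega^1_{X/S})$. By adjunction, a $p$-linear map $T_{X/S} \to \SO_X$ is the same as an $\SO_X$-linear map $\widetilde\varphi : F^*T_{X/S} \to \SO_X$, defined by $\widetilde\varphi(F^*v) = \varphi(v)$, and such maps are exactly the global sections $\omega$ of $F^*\Omega^1_{X/S}$. A local-coordinate check shows that $\omega(F^*v)$ automatically lies in $Z(\Lambda_1)$: the Frobenius pullback of $v$ introduces $p$-th powers of the coefficients of $v$, so applying $d_{X/S}$ to $\omega(F^*v)$ annihilates them, and no additional constraint on $\omega$ is required beyond being a section of $F^*\Omega^1_{X/S}$.

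Substituting into Proposition~\ref{prop:classpStr} then yields the formula. Applying Proposition~\ref{resLambda} to the restricted $\SO_S$-Lie algebroid $T_{X/S}$, whose anchor is the identity and whose $p$-structure is the $p$-th power of vector fields, gives the canonical $p$-structure $(f+v)^{[p]} = f^p + v^{[p]} + v^{p-1}(f)$ on $\Lambda_1$. Adding the correction term $\varphi(\smb(f+v)) = \omega(F^*v)$ produces the claimed expression $(f+v)^{[p]'} = f^p + v^{[p]} + v^{p-1}(f) + \omega(F^*v)$. The main subtlety I expect is step two, namely verifying that the natural bijection between $p$-linear maps $T_{X/S} \to \SO_X$ and $H^0(F^*\Omega^1_{X/S})$ automatically restricts to a bijection with $p$-linear maps into the centralizer $Z(\Lambda_1)$; this is the only non-formal point, and it is where the characteristic-$p$ computation $d_{X/S}(f^p) = 0$ does the essential work.
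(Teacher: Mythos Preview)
Your strategy is exactly the paper's: invoke Proposition~\ref{prop:classpStr} with the canonical $p$-structure as base point, compute $Z(\Lambda_1)$, and translate the admissible $p$-linear maps $\varphi$ into differential-form language. Your identification $Z(\Lambda_1) = \ker\bigl(d_{X/S}: \SO_X \to \Omega^1_{X/S}\bigr)$ and the derivation of the final formula both agree with the paper.

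The local-coordinate verification in your step two, however, is wrong. Writing $v = \sum_i a_i \partial_i$ and setting $g_i := \omega(1 \otimes \partial_i) \in \SO_X$, one has $\omega(F^*v) = \sum_i a_i^p g_i$ and hence, by the Leibniz rule,
\[
d_{X/S}\bigl(\omega(F^*v)\bigr) \;=\; \sum_i g_i \, d(a_i^p) + \sum_i a_i^p \, dg_i \;=\; \sum_i a_i^p \, dg_i.
\]
The $p$-th powers in the coefficients kill only the first sum; the second survives, and it vanishes for all $v$ precisely when each $dg_i = 0$. So a general $\omega \in H^0(F^*\Omega^1_{X/S})$ does \emph{not} automatically yield a $\varphi$ with values in $Z(\Lambda_1)$: already on $\mathbb{A}^1$ the form $\omega = x\,(1 \otimes dx)$ gives $\omega(F^*\partial_x) = x \notin \ker(d)$. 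Your claim that ``no additional constraint on $\omega$ is required'' is therefore false. The paper avoids this pointwise check and instead passes to the pushforward, recording that $F_*\bigl(Z(\Lambda_1)\bigr) = \SO_X$ inside $F_*\SO_X$; the identification of $p$-linear maps $T_{X/S} \to Z(\Lambda_1)$ with $1$-forms is then made through this isomorphism rather than through the larger space $\Hom_{\SO_X}(F^*T_{X/S}, \SO_X)$.
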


\begin{proof}
We know that the $p$-th power on $T_{X/S}$ induces a $p$-structure $[p]$ on $\SD_{X/S}$ given by
$$(f+v)^{[p]} = f^p+v^{[p]} + v^{p-1}(f)$$
for $f \in \SO_X$ and $v \in T_{X/S}$. So by Proposition \ref{prop:classpStr} any other $p$-structure is given by adding 
a $\SO_X$-linear map $\varphi:F^*T_X \longrightarrow Z(\SD_{X/S}^1)$ composed with the symbol. Let us compute the center $Z(\SD_{X/S}^1)$. 
Any element of $Z(\SD_{X/S}^1)$ has to commute in particular with all elements in $\SD_{X/S}^0=\SO_X$. But the elements of $T_{X/S}$ that commute with $\SO_X$ are 
those in the kernel of the anchor  map $\delta: T_{X/S} \to T_{X/S}$, which is the identity map. Thus we obtain that $Z(\SD_{X/S}^1)\subset \SO_X$ and we have
$$F_*(Z(\SD_{X/S}^1)) = \SO_X = \ker(d:F_*\SO_X \to F_*\Omega_{X/S}^1).$$
Therefore, any other $p$-structure $[p]'$ must equal $[p] + \varphi \circ \smb$, where $\varphi:F^*T_{X/S} \longrightarrow F^*\SO_X$
is $\SO_X$-linear, which corresponds a global $1$-form in $H^0(F^*\Omega^1_{X/S})$, yielding the result.
\end{proof}

%Similarly, repeating the same proof over each nonzero $\lambda$ and using continuity we prove that the unique admissible $p$-Lie structure on 
%$\Lambda^{\op{dR},R}$ is
%$$D^{[p]_\lambda}=\lambda^{p-1}D^{[p]}$$

\bigskip

\section{$p$-curvature of a restricted $\Lambda$-module}

Let $\Lambda$ be a sheaf of rings of differential operators on $X$ over $S$ and let $E$ be a coherent
$\SO_X$-module.

\begin{definition}
A $\Lambda$-module structure on $E$ is an $\SO_X$-linear map
$$ \nabla : \Lambda \otimes_{\SO_X} E \longrightarrow E$$
satisfying the usual module axioms and such that the $\SO_X$-module structure on $E$ induced by $\SO_X \to
\Lambda$ coincides with the original one. 
\end{definition}

We will denote a $\Lambda$-module $E$ by $(E, \nabla)$ and for any local section $D \in \Lambda$ the 
$\SO_S$-linear endomorphism of $E$ induced by the action of $D$ will be denoted by  $\nabla_D \in \End_{\SO_S}(E)$. Given a $\Lambda$-module $(E, \nabla)$
and a local section $D \in \Lambda_1$ we define 
the $p$-curvature $\psi_\nabla(D):E\longrightarrow E$ as the map
$$\psi_\nabla(D)= (\nabla_D)^p- \nabla_{D^{[p]}} \in \End_{\SO_S}(E).$$
We observe that we can define the $p$-curvature in terms of the map $\iota:\Lambda_1\to \Lambda$ 
defined in Subsection \ref{defiotamap} as follows
$$\psi_\nabla(D) = (\nabla_D)^p-\nabla_{D^{[p]}}=\nabla_{D^p}-\nabla_{D^{[p]}}=\nabla_{D^p-D^{[p]}}=\nabla_{\iota(D)}.$$

\begin{proposition}
For any $D\in \Lambda_1$, $\psi_\nabla(D):E\to E$ is an $\SO_X$-linear map.
\end{proposition}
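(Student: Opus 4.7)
The plan is to exploit the identity $\psi_\nabla(D) = \nabla_{\iota(D)}$ already recorded immediately before the statement, together with the fact (Proposition \ref{cor:iotaCenter}) that $\iota(D) \in Z(\Lambda)$. The point is that an element of the center of $\Lambda$ acts on any $\Lambda$-module by an endomorphism that commutes with the action of every other element of $\Lambda$, and in particular with the action of $\SO_X = \Lambda_0 \subset \Lambda$.

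More precisely, first I would rewrite $\psi_\nabla(D) = \nabla_{\iota(D)}$ using the identity $(\nabla_D)^p = \nabla_{D^p}$, which holds because $\nabla$ is a ring homomorphism $\Lambda \to \End_{\SO_S}(E)$. Then, for any local sections $f \in \SO_X$ and $e \in E$, I would compute
\[
\psi_\nabla(D)(f \cdot e) = \nabla_{\iota(D)}(\nabla_f(e)) = \nabla_{\iota(D) \cdot f}(e).
\]
Since $\iota(D)$ is central in $\Lambda$ by Proposition \ref{cor:iotaCenter}, we have $\iota(D) \cdot f = f \cdot \iota(D)$ in $\Lambda$, and therefore
\[
\nabla_{\iota(D) \cdot f}(e) = \nabla_{f \cdot \iota(D)}(e) = \nabla_f(\nabla_{\iota(D)}(e)) = f \cdot \psi_\nabla(D)(e),
\]
which establishes $\SO_X$-linearity. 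Additivity of $\psi_\nabla(D)$ is automatic from additivity of the module action.

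There is no significant obstacle here; the whole content of the statement has been packaged into the earlier observation that $\iota$ takes values in the center of $\Lambda$, which itself rested on Jacobson's identity $\ad(D^p) = \ad(D)^p$ together with property (1) of a $p$-structure. The only minor point to make explicit is that $\SO_X$ acts on $E$ through its inclusion into $\Lambda$ (a hypothesis of the definition of $\Lambda$-module), so that commutation in $\Lambda$ indeed transfers to commutation of endomorphisms of $E$.
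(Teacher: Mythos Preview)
Your proof is correct and follows essentially the same route as the paper: both use the identity $\psi_\nabla(D)=\nabla_{\iota(D)}$, invoke Proposition~\ref{cor:iotaCenter} to get $\iota(D)\in Z(\Lambda)$, and then compute $\nabla_{\iota(D)}\circ\nabla_f=\nabla_{\iota(D)f}=\nabla_{f\iota(D)}=\nabla_f\circ\nabla_{\iota(D)}$, using that the $\SO_X$-action on $E$ is via $\SO_X\hookrightarrow\Lambda$.
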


\begin{proof}
By definition the $\SO_X$-module structure induced by the action of $\Lambda$ on $E$ coincides 
with the $\SO_X$-module structure of $E$, so for any local sections $s\in E$ and $f\in \SO_X$ we have
$$fs=\nabla_f(s).$$
Moreover, as $\iota(D)\in Z(\Lambda)$ we have for any local section $D\in \Lambda_1$ 
\begin{eqnarray*}
\psi_\nabla(D)(fs) & = & \nabla_{\iota(D)} \circ \nabla_f(s) = \nabla_{\iota(D) f}(s) \\  
                   & = & \nabla_{f\iota(D)}(s) = \nabla_f \circ \nabla_{\iota(D)}(s)=f\psi_\nabla(D)(s).
\end{eqnarray*}
\end{proof}

This, together with Proposition \ref{prop:iota-p-linear} and the fact that $\iota$ factors through the symbol, 
proves that the $p$-curvature induces a $p$-linear map
$$\psi_\nabla: H\longrightarrow \End_{\SO_X}(E),$$
where $H = \Lambda_1 / \Lambda_0$. So we obtain an $\SO_X$-linear map
$$\psi_\nabla: F^*H \longrightarrow \End_{\SO_X}(E).$$
 
\begin{proposition}
\label{prop:p-curvature}
For each $\Lambda$-module $(E,\nabla)$ the $p$-curvature $\psi_\nabla$ induces a $F^*H^\vee$-valued Higgs field 
on $E$, i.e., a morphism of $\SO_X$-algebras
$$\tilde{\psi}_\nabla : \Sym^\bullet F^*H \longrightarrow \End_{\SO_X}(E).$$
Moreover, for any local sections $D\in H$ and  $D'\in \Lambda$, $\nabla_{D'}$ commutes with $\psi_\nabla(D)$.
\end{proposition}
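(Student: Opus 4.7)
The plan is to reduce both claims to the structural facts established earlier for the map $\iota : H \to F_*(Z(\Lambda))$, namely that it is $\SO_X$-linear (Proposition \ref{prop:iota-p-linear}), that its image lies in the center $Z(\Lambda)$ (Proposition \ref{cor:iotaCenter}), and that it factors $\psi_\nabla$ via
$$\psi_\nabla(D) = \nabla_{\iota(D)}.$$
Combined with the $p$-linearity, $\psi_\nabla$ descends to an $\SO_X$-linear map $F^*H \to \End_{\SO_X}(E)$, so the only content of the proposition is a pairwise commutativity statement.

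First I would check that the images of $\psi_\nabla$ pairwise commute as endomorphisms of $E$. For any local sections $D_1, D_2 \in H$ with liftings $\overline{D_1}, \overline{D_2} \in \Lambda_1$, both $\iota(D_i) = \overline{D_i}^p - \overline{D_i}^{[p]}$ lie in $Z(\Lambda)$, so they commute with each other in $\Lambda$. Applying the associative action $\nabla$ gives
$$\psi_\nabla(D_1)\circ \psi_\nabla(D_2) = \nabla_{\iota(D_1)}\circ \nabla_{\iota(D_2)} = \nabla_{\iota(D_1)\iota(D_2)} = \nabla_{\iota(D_2)\iota(D_1)} = \psi_\nabla(D_2)\circ \psi_\nabla(D_1).$$

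Second, I would invoke the universal property of the symmetric algebra of an $\SO_X$-module: any $\SO_X$-linear map from an $\SO_X$-module $M$ to an associative $\SO_X$-algebra $A$ whose image consists of pairwise commuting elements extends uniquely to a morphism of $\SO_X$-algebras $\Sym^\bullet M \to A$. Applying this to $M = F^*H$ and $A = \End_{\SO_X}(E)$ produces the desired morphism
$$\tilde{\psi}_\nabla : \Sym^\bullet F^*H \longrightarrow \End_{\SO_X}(E),$$
which by definition is the data of an $F^*H^\vee$-valued Higgs field on $E$.

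For the final assertion, let $D \in H$ and $D' \in \Lambda$ be local sections. Since $\iota(D) \in Z(\Lambda)$, we have $D' \cdot \iota(D) = \iota(D) \cdot D'$ in $\Lambda$, and applying $\nabla$ gives
$$\nabla_{D'}\circ \psi_\nabla(D) = \nabla_{D'\iota(D)} = \nabla_{\iota(D)D'} = \psi_\nabla(D)\circ \nabla_{D'}.$$
There is no real obstacle: once the centrality of $\iota$ and the $p$-linearity have been used to produce the $\SO_X$-linear map $\psi_\nabla : F^*H \to \End_{\SO_X}(E)$, everything reduces to the fact that elements of the center commute with each other and with all of $\Lambda$.
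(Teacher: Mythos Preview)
Your proposal is correct and follows essentially the same approach as the paper: both arguments reduce to the identity $\psi_\nabla(D)=\nabla_{\iota(D)}$ together with the centrality of $\iota(D)$ in $\Lambda$ (Proposition \ref{cor:iotaCenter}), deduce pairwise commutativity of the $\psi_\nabla(D_i)$, invoke the universal property of $\Sym^\bullet$, and then obtain $[\psi_\nabla(D),\nabla_{D'}]=0$ from $\iota(D)\in Z(\Lambda)$ in the same way.
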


\begin{proof}
We have already proven that the $p$-curvature induces an $\SO_X$-linear map 
$\psi_\nabla: F^*H \longrightarrow \End_{\SO_X}(E)$. In order for this map to lift to a morphism of 
algebras $\Sym^\bullet F^*H \longrightarrow \End_{\SO_X}(E)$, it is necessary that for each 
$D_1,D_2\in H$
$$\left [ \psi_\nabla(D_1), \psi_\nabla(D_2) \right] =0.$$
But, taking into account that from Proposition \ref{cor:iotaCenter} we know that the image 
of $\iota: F^*H \to \Lambda$ lies in the center $Z(\Lambda)$, we have
$$\left [ \psi_\nabla(D_1), \psi_\nabla(D_2) \right]= \left [ \nabla_{\iota(D_1)}, \nabla_{\iota(D_2)} \right] = \nabla_{[\iota(D_1),\iota(D_2)]}=\nabla_0=0.$$
The second part follows from a similar computation
$$\left [ \psi_\nabla(D), \nabla_{D'} \right]= \left [ \nabla_{\iota(D_1)}, \nabla_{D'} \right] = \nabla_{[\iota(D), D']}=\nabla_0=0.$$
\end{proof}

\begin{remark}
The previous proposition was already obtained in \cite[Lemma 4.9]{Langer} for modules over restricted
$\SO_S$-Lie algebroids $H$, which correspond to $\Lambda$-modules, where $\Lambda = \Lambda_H$ is the universal
enveloping algebra of the $\SO_S$-Lie algebroid $H$. We note that the proofs of the two previous propositions are similar to
those given in \cite{Langer}, but rely on the more general statement obtained in Proposition 
\ref{cor:iotaCenter} for general restricted sheaves of rings of differential operators.
\end{remark}

\bigskip

\section{Hitchin map for restricted $\Lambda$-modules}

In this section we assume that $X$ is an integral projective scheme over  $S = \mathrm{Spec}(\mathbb{K})$. 
This assumption is needed in our main
Theorem \ref{thm:HitchinDescent}. We refer the reader to \cite{Langer} sections 3.5 and 4.5 for a construction 
of the Hitchin map in the relative case.

\bigskip

Given a restricted sheaf $\Lambda$ of rings of differential operators on $X$ 
and a $\Lambda$-module $(E,\nabla)$ of rank $r$ over $X$, we 
have proved in Proposition \ref{prop:p-curvature} that the $p$-curvature of $(E, \nabla)$ 
induces a $F^*H^\vee$-valued Higgs field on $E$
$$ \psi_{\nabla} \in H^0(X, \End(E) \otimes F^* H^\vee). $$

\bigskip

Then, by taking the (classical) Hitchin map $h$ for rank-$r$ Higgs sheaves 
we obtain a point in the Hitchin base 
$\mathcal{A}_r(X,F^* H^\vee)$
$$h(E,\psi_\nabla)=(\tr(\wedge^k\psi_\nabla))_{k=1}^r \in 
\mathcal{A}_r(X,F^* H^\vee) := \bigoplus_{k=1}^r H^0(X,\Sym^k(F^*H^\vee)).$$
Therefore, the $p$-curvature map $(E, \nabla) \mapsto \psi_\nabla$ composed with the Hitchin map $h$ 
defines a map $h_\Lambda$
\begin{equation}
\label{eq:hitchinMapLambda}
h_\Lambda  : \SM^\Lambda_X(r,P) \longrightarrow \mathcal{A}_r(X,F^* H^\vee),  \quad \quad  (E,\nabla) \mapsto h(E, \psi_\nabla),
\end{equation}
where $\SM^\Lambda_X(r,P)$ denotes the coarse moduli space parameterizing Giesecker semi-stable $\Lambda$-modules over
$X$ of rank $r$ and with Hilbert polynomial $P$ (\cite{Simpson1}, \cite{Langer1}, \cite{Langer2}).
\bigskip

In order to understand the structure of the map $h_\Lambda$, let us first consider the example 
given by the trivial $p$-structure on the symmetric algebra $\Sym^\bullet(H)$ --- see  Subsection \ref{trivialpstructure}. 
In that case a $\Sym^\bullet(H)$-module is an $H^\vee$-valued Higgs sheaf and its $p$-curvature is just the $p$-th power of the Higgs field
$$\psi_\nabla(D) = \nabla_{D^p}=(\nabla_D)^p.$$
Then it is easily seen that the coefficients of the characteristic polynomial of $\psi_\nabla$ are 
pull-backs by the Frobenius map of global sections in $H^0(X,\Sym^k(H^\vee))$.

\bigskip

%Laszlo and Pauly \cite{LP01} showed that in the case of connections over a curve, the coefficients of the characteristic %polynomial of $\psi_\nabla$ are $p$-th powers, so the Hitchin map induces a morphism
%$$\SM_{\op{dR}}(X,r,d)\longrightarrow \bigoplus_{k=1}^r H^0(X,K_X^k)$$
%In this section we will try to extend this result to other $\Lambda$-modules and we will prove that, under certain %conditions on $\Lambda$, the coefficients of the characteristic polynomial are always pullbacks of sections by the %Frobenius map, therefore inducing a map
%$$\SM_{\Lambda}(X,r,d) \longrightarrow \SH_{H^\vee}=\bigoplus_{k=1}^r H^0(X,\Sym^k(H^\vee))$$

Before proving our main result on the map $h_\Lambda$, 
we will need to recall the definition of the canonical connection
$$ \nabla^{\op{can}} : F^* \mathcal{G} \longrightarrow F^* \mathcal{G} \otimes \Omega^1_X $$
on a pull-back sheaf $F^* \mathcal{G}$
for a coherent $\SO_X$-module $\mathcal{G}$ under the absolute Frobenius map $F$ of $X$. Over an affine open subset $\mathrm{Spec}(A) = U \subset X$, we denote the $A$-module of local sections $\mathcal{G}(U)$ by $M$. Then local sections 
of the pull-back $F^* \mathcal{G}(U)$ correspond to $A \otimes_A M$ with the $A$-module structure given by left 
multiplication and the action of $A$ on $A$ given by the Frobenius map $F$. In other words, we have the
identifications $\lambda^p a \otimes_A m = a \otimes_A \lambda m$ for any $\lambda,a \in A$ and $m \in M$. Then
with this notation the canonical connection is defined by
$$ \nabla^{\op{can}} : a \otimes_A m \mapsto da \otimes_A m,$$
or equivalently, $\nabla^{\op{can}}_{\partial}(a \otimes_A m) = \partial a \otimes m$ for any
vector field $\partial$.

\bigskip

\begin{lemma} \label{Frobdescentopen}
Let $\mathcal{G}$ be a torsion-free $\SO_X$-module over an integral scheme $X$ and let $s \in H^0(X, F^* \mathcal{G})$ be a global
section. Suppose that there exists an open subset $\Omega \subset X$ such that 
$$\nabla^{\op{can}}_\partial (s_{| \Omega}) = 0$$
for any vector field $\partial$ over $\Omega$. Then $s$ descends under the Frobenius map, i.e. there exists $s' 
\in H^0(X, \mathcal{G})$ such that $s = F^*(s')$.
\end{lemma}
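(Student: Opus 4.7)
The plan is to combine Cartier descent on the smooth locus of $\Omega$ with a torsion-free extension argument across $X \setminus \Omega'$.

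First I would pass to a dense smooth open subset $\Omega' \subset \Omega$ of $X$ (nonempty by generic smoothness of an integral scheme of finite type over $\mathbb{K}$). On $\Omega'$ the absolute Frobenius is faithfully flat, so classical Cartier descent applies: the kernel of $\nabla^{\op{can}}$ on $F^*\mathcal{G}|_{\Omega'}$ is canonically identified with $\mathcal{G}|_{\Omega'}$ via $F^*$. Since $s|_{\Omega'}$ is horizontal by hypothesis, Cartier descent yields a unique section $s' \in H^0(\Omega', \mathcal{G})$ with $F^*(s') = s|_{\Omega'}$.

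Next I would extend $s'$ across $X \setminus \Omega'$ to a global section of $\mathcal{G}$. Since $\mathcal{G}$ is torsion-free on the integral scheme $X$ and $\Omega'$ is dense, the natural inclusion $\mathcal{G} \hookrightarrow j_*(\mathcal{G}|_{\Omega'})$ (with $j\colon \Omega' \hookrightarrow X$) is injective, so uniqueness of any extension is automatic and the question becomes purely local. On an affine open $U = \Spec A$, I would use that $s|_U \in F^*\mathcal{G}(U) = A \otimes_{A,F} \mathcal{G}(U)$ is defined on all of $U$ and satisfies $s|_{U \cap \Omega'} = F^*(s'|_{U \cap \Omega'})$; combining a local representation $s|_U = \sum_i a_i \otimes_F m_i$ with torsion-freeness of $\mathcal{G}$ one extracts a preimage $s'_U \in \mathcal{G}(U)$ whose restriction to $U \cap \Omega'$ agrees with $s'|_{U \cap \Omega'}$. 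The local preimages then glue to the desired global $s' \in H^0(X, \mathcal{G})$.

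The main obstacle is this second step: torsion-freeness of $\mathcal{G}$ alone does not yield extensions of arbitrary sections from a dense open, so it is essential that $s$ is globally defined on $X$ as a section of $F^*\mathcal{G}$. A cleaner formalization, if one prefers to avoid the local bookkeeping, is to argue that $\nabla^{\op{can}}(s)$, viewed on $\Omega'$ as a global section of the torsion-free sheaf $F^*\mathcal{G} \otimes \Omega^1_{\Omega'}$, vanishes on the nonempty open $\Omega \cap \Omega'$ and therefore vanishes on all of $\Omega'$; this reduces the statement to a single global application of Cartier descent on $\Omega'$, followed by the torsion-free extension argument across $X \setminus \Omega'$.
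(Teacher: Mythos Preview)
Your approach is essentially the paper's: apply Cartier descent on a dense locus where Frobenius is flat (you use a smooth open $\Omega' \subset \Omega$, the paper passes directly to the fraction field $K$ of an affine chart) and then extend the descended section using torsion-freeness of $\mathcal{G}$---a step the paper likewise declares straightforward and leaves to the reader. Your final ``cleaner formalization'' is redundant as written, since $\Omega' \subset \Omega$ forces $\Omega \cap \Omega' = \Omega'$, so horizontality on $\Omega'$ is already part of the hypothesis and the torsion-free extension across $X \setminus \Omega'$ remains exactly the same issue you flagged.
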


\begin{proof}
It will be enough to show the statement locally on an affine open subset $\Spec(A)$ of $X$. We then apply Cartier's theorem
over $\Spec(K)$, where $K$ is the fraction field of $A$, and obtain the existence of the Frobenius descend $s'$ over $\Spec(K)$.
Now the section $s$ also descends over $\Spec(A)$ since $\mathcal{G}$ is torsion-free. The computations are straightforward 
and left to the reader.
\end{proof}

\begin{lemma}
\label{lemma:innerCommutation}
Let $(E, \nabla)$ be a $\Lambda$-module. Then for any local sections $D \in H = \Lambda_1 / \Lambda_0$ 
and $D' \in \Lambda_1$
we have the following commutative diagram
\begin{eqnarray*}
\xymatrixcolsep{5pc} 
\xymatrix{
E\otimes F^*H^\vee \ar[r]^-{\id\otimes D} \ar[d]_{\tilde{\nabla}_{D'}} & E \ar[d]^{\nabla_{D'}}\\
E\otimes F^*H^\vee \ar[r]^-{\id\otimes D} & E,
}
\end{eqnarray*}
where we define the endomorphism $\tilde{\nabla}_{D'}$ by
$$\tilde{\nabla}_{D'} = \nabla_{D'} \otimes \id_{F^*H^\vee} + \id_E \otimes \nabla_{\delta \circ \smb(D')}^{\op{can}}.$$
\end{lemma}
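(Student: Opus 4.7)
The plan is to verify the commutativity pointwise on local sections $s \otimes \omega \in E \otimes F^*H^\vee$ by writing out both compositions explicitly and comparing.

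First, I would make the map $\id \otimes D$ explicit. For $D \in H$, the natural (Frobenius-$p$-linear) inclusion $H \hookrightarrow F^*H$, $D \mapsto 1 \otimes D$, combined with the canonical pairing $\langle -,- \rangle : F^*H^\vee \otimes F^*H \to \SO_X$, gives
$$(\id \otimes D)(s \otimes \omega) = \langle \omega, 1 \otimes D\rangle \cdot s.$$
Using the identity $D'\cdot f = [D',f] + fD' = \delta_{D'}(f) + fD'$ in $\Lambda$, the module action satisfies $\nabla_{D'}(fs) = \delta_{D'}(f)s + f\nabla_{D'}(s)$, so the left-hand composition becomes
$$\nabla_{D'}\bigl(\langle \omega, 1\otimes D\rangle\, s\bigr) = \delta_{D'}(\langle \omega, 1\otimes D\rangle)\, s + \langle \omega, 1\otimes D\rangle\, \nabla_{D'}(s).$$
Meanwhile, from the definition of $\tilde{\nabla}_{D'}$, the right-hand composition equals
$$\langle \omega, 1\otimes D\rangle\, \nabla_{D'}(s) + \langle \nabla^{\op{can}}_{\delta(\smb(D'))}(\omega), 1\otimes D\rangle\, s.$$
Since $\delta_{D'} = \delta(\smb(D'))$ by construction of the anchor, the claim reduces to the scalar identity
$$\delta_{D'}\bigl(\langle \omega, 1\otimes D\rangle\bigr) = \bigl\langle \nabla^{\op{can}}_{\delta_{D'}}(\omega),\, 1\otimes D\bigr\rangle.$$

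For this I would invoke two standard properties of $\nabla^{\op{can}}$. The explicit formula $\nabla^{\op{can}}(a \otimes m) = da \otimes m$ shows that every pullback section $1 \otimes D \in F^*H$ is flat, so $\nabla^{\op{can}}_\partial(1\otimes D) = 0$ for every vector field $\partial$. Moreover, the canonical connections on $F^*H$ and $F^*H^\vee$ are dual, so the induced connection on their pairing obeys the Leibniz rule
$$\partial\langle \omega, \eta\rangle = \langle \nabla^{\op{can}}_\partial\omega, \eta\rangle + \langle \omega, \nabla^{\op{can}}_\partial\eta\rangle.$$
Applying this with $\partial = \delta_{D'}$ and $\eta = 1 \otimes D$, the second term vanishes by flatness, yielding precisely the required equality.

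The computation is essentially routine once $\id \otimes D$ is interpreted correctly; the only mild subtlety is that neither $\nabla_{D'}$ nor $\nabla^{\op{can}}_{\delta_{D'}}$ is $\SO_X$-linear, so one should check in passing that $\tilde{\nabla}_{D'}$ is well-defined on the $\SO_X$-module tensor product $E \otimes_{\SO_X} F^*H^\vee$. This amounts to verifying that the two factors obey a Leibniz rule with respect to the same derivation of $\SO_X$, namely $\delta_{D'}=\delta(\smb(D'))$, which is exactly the calculation above. I therefore expect no genuine obstacle: the content of the lemma is the universal compatibility between the $\Lambda$-action on $E$ and the canonical connection on the Frobenius pullback, traced directly back to the Leibniz-type rule encoded in the $\Lambda$-module structure together with the flatness of pullback sections under $\nabla^{\op{can}}$.
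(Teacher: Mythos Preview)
Your argument is correct and follows essentially the same route as the paper: a direct verification on simple tensors reducing to the compatibility between the derivation $\delta_{D'}$ and contraction against $D$. The only cosmetic difference is that the paper decomposes $\omega \in F^*H^\vee$ further as $a \otimes h$ and uses explicitly that $\delta_{\smb(D')}(\langle h,D\rangle^p a)=\langle h,D\rangle^p\,\delta_{\smb(D')}(a)$, whereas you package this same fact as flatness of $1\otimes D$ under $\nabla^{\op{can}}$ together with the Leibniz rule for the dual pairing.
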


\begin{proof}
It is enough to work locally over an affine open subset $U = \mathrm{Spec}(A)$. 
Consider an irreducible tensor $v\otimes a \otimes h \in (E \otimes F^* H^\vee)(U)$, 
where $v \in E(U)$, $h\in H^\vee(U)$, $a \in A = \SO_X(U)$ and the last tensor 
product is taken over the Frobenius map, i.e.
$$\lambda^p v \otimes a \otimes h = v \otimes \lambda^p a \otimes h =  v \otimes a \otimes \lambda h.$$
Then, using associativity of the ring $\Lambda$ and the fact that $[D',f] = \delta_{\smb(D')}(f)$ 
for any $D' \in \Lambda_1$ and  $f \in \SO_X$, have the following
\begin{multline*}
\tilde{\nabla}_{D'}(v\otimes a \otimes h) = \nabla_{D'}(v)\otimes a \otimes h  + v \otimes 
 \nabla^{\op{can}}_{\delta \circ \smb(D')}(a \otimes h) \\
=\nabla_{D'}(v)\otimes a \otimes h + v \otimes \delta_{\smb(D')}(a) \otimes h.
\end{multline*}
Applying $\id \otimes D$ for a local section $D \in H$ we have
\begin{multline*}
(\id \otimes D) \circ \tilde{\nabla}_{D'}(v\otimes a \otimes h)  = 
\nabla_{D'}(v) \otimes a \otimes \langle h , D \rangle + v \otimes \delta_{\smb(D')}(a) \otimes \langle h, D \rangle 
\\
=\langle h , D \rangle^p a \nabla_{D'}(v) + \langle h, D \rangle^p \delta_{\smb(D')}(a) v,
\end{multline*}
where $\langle -,- \rangle$ denotes the standard pairing between $H^\vee$ and $H$. On the other hand
\begin{multline*}
\nabla_{D'} \circ (\id \circ D)(v \otimes a \otimes h ) = \nabla_{D'}(\langle h, D \rangle^p a v) = 
\langle h , D \rangle^p a \nabla_{D'}(v) + \delta_{\smb(D')}(\langle h, D \rangle^p a) v\\
=\langle h , D \rangle^p a \nabla_{D'}(v) + \langle h, D \rangle^p \delta_{\smb(D')}(a) v
\end{multline*}
so we obtain the desired equality for an irreducible tensor. By additivity we conclude equality for any 
local section of $E \otimes F^* H^\vee$.
\end{proof}

\begin{corollary}
\label{cor:innerFlatness}
Let $(E, \nabla)$ be a $\Lambda$-module and let $\psi_\nabla: E\longrightarrow E\otimes F^*H^\vee$ denote 
its $p$-curvature. Then for any local section $D'\in \Lambda_1$ the following diagram commutes
\begin{eqnarray*}
\xymatrixcolsep{5pc} 
\xymatrix{
E \ar[r]^-{\psi_\nabla} \ar[d]_{\nabla_{D'}} & E\otimes F^*H^\vee \ar[d]^{\tilde{\nabla}_{D'}}\\
E \ar[r]^-{\psi_\nabla} & E\otimes F^*H^\vee,
}
\end{eqnarray*}
where $\tilde{\nabla}_{D'}$ was defined in the previous lemma.
\end{corollary}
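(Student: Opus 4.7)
The plan is to derive the corollary from Lemma \ref{lemma:innerCommutation} combined with the commutation statement of Proposition \ref{prop:p-curvature}, namely that $\nabla_{D'}$ commutes with every endomorphism $\psi_\nabla(D) \in \End_{\SO_X}(E)$ for $D \in H$.

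First, I would record the tautological identity that for every local section $D \in H$ the Higgs field $\psi_\nabla : E \longrightarrow E\otimes F^*H^\vee$ satisfies
$$(\id \otimes D) \circ \psi_\nabla = \psi_\nabla(D) \in \End_{\SO_X}(E),$$
which is just the universal property relating the $\SO_X$-linear map $\psi_\nabla : F^*H \to \End_{\SO_X}(E)$ to its associated $F^*H^\vee$-valued Higgs field. Next, I would post-compose both legs of the square by $\id \otimes D$ for an arbitrary local section $D \in H$ and chain the identities
\begin{eqnarray*}
(\id \otimes D) \circ \tilde{\nabla}_{D'} \circ \psi_\nabla
& = & \nabla_{D'} \circ (\id \otimes D) \circ \psi_\nabla \\
& = & \nabla_{D'} \circ \psi_\nabla(D) \\
& = & \psi_\nabla(D) \circ \nabla_{D'} \\
& = & (\id \otimes D) \circ \psi_\nabla \circ \nabla_{D'},
\end{eqnarray*}
where the first equality uses Lemma \ref{lemma:innerCommutation} and the third equality uses the commutation statement in Proposition \ref{prop:p-curvature}.

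It then remains to infer the equality $\tilde{\nabla}_{D'} \circ \psi_\nabla = \psi_\nabla \circ \nabla_{D'}$ from the fact that the two maps agree after post-composing with $\id \otimes D$ for every local section $D \in H$. This is a duality argument: the $\SO_X$-linear map $\id \otimes D : E \otimes F^*H^\vee \to E$ is $\id_E$ tensored with the contraction against $1 \otimes D \in F^*H = (F^*H^\vee)^\vee$, and the collection $\{1 \otimes D : D \in H\}$ spans $F^*H$ as an $\SO_X$-module since any local section $a \otimes D$ equals $a \cdot (1 \otimes D)$. The main obstacle, if any, is precisely this final descent step. A more hands-on alternative, avoiding duality, is to test both legs of the square on a local section $v \in E$ over an affine open $\Spec(A) \subset X$ and directly expand using the explicit formulas derived in Lemma \ref{lemma:innerCommutation}; this reproduces the same computation more concretely but at the cost of essentially repeating the local calculation.
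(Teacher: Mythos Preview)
Your proposal is correct and follows essentially the same route as the paper: both arguments factor $\psi_\nabla(D) = (\id \otimes D)\circ \psi_\nabla$, use Lemma \ref{lemma:innerCommutation} and Proposition \ref{prop:p-curvature} to show that $(\id \otimes D)\circ\bigl(\tilde{\nabla}_{D'}\circ\psi_\nabla - \psi_\nabla\circ\nabla_{D'}\bigr)=0$ for all $D\in H$, and then conclude by the nondegeneracy of the evaluation pairing on $F^*H^\vee$. Your presentation of the final duality step is in fact slightly more explicit than the paper's, which simply invokes that ``the kernel of the evaluation map in $F^*H^\vee$ is trivial''.
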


\begin{proof}
By Proposition \ref{prop:p-curvature} we know that for any local sections 
$D\in H$ and $D'\in \Lambda_1$ the two endomorphisms $\psi_\nabla(D)$ and $\nabla_{D'}$
commute. Moreover, $\psi_\nabla(D):E \to E$ is the composition of the following maps
$$E \stackrel{\psi_\nabla}{\longrightarrow} E\otimes F^*H^\vee \stackrel{\id\otimes D}{\longrightarrow} E$$
so we have the following diagram in which we know that the outer square and the 
inner right square (by Lemma \ref{lemma:innerCommutation}) are commutative
\begin{eqnarray*}
\xymatrixcolsep{5pc} 
\xymatrix{
E \ar[r]^-{\psi_\nabla} \ar@/^2pc/[rr]^-{\psi_\nabla(D)} \ar[d]_{\nabla_{D'}} & E\otimes F^*H^\vee \ar[r]^-{\id\otimes D} \ar[d]^{\tilde{\nabla}_{D'}} & E \ar[d]^{\nabla_{D'}}\\
E \ar[r]^-{\psi_\nabla} \ar@/_1pc/[rr]_-{\psi_\nabla(D)} & E\otimes F^*H^\vee \ar[r]^-{\id\otimes D} & E.
}
\end{eqnarray*}
Thus, for any $D\in H$ and $D'\in \Lambda_1$
\begin{multline*}
0=\nabla_{D'} \circ \psi_\nabla(D) - \psi_\nabla(D)\circ \nabla_{D'} = \nabla_{D'}\circ (\id\otimes D) \circ \psi_\nabla - (\id\otimes D) \circ \psi_\nabla \circ \nabla_{D'}\\
=(\id \otimes D) \circ \tilde{\nabla}_{D'} \circ \psi_\nabla - (\id \otimes D) \circ \psi_\nabla \circ \nabla_{D'}\\
=(\id \otimes D) \circ \left(\tilde{\nabla}_{D'} \circ \psi_\nabla - \psi_\nabla \circ \nabla_{D'} \right).
\end{multline*}
As this composition is zero for any $D\in H$ and the kernel of the evaluation map in $F^*H^\vee$ is trivial, we obtain that
$$\tilde{\nabla}_{D'} \circ \psi_\nabla - \psi_\nabla \circ \nabla_{D'}=0.$$
\end{proof}

%Thus, we see that $\psi_\nabla$ is flat with respect to the $\Lambda_1$-action induced on $E$ by $\lambda$ and on $E\otimes %F^*H^\vee$ by $\tilde{\nabla}=\nabla \otimes \nabla^\delta$.

The next proposition will be used in the proof of the main result (Theorem \ref{thm:HitchinDescent}).

\begin{proposition} \label{pcurvisflat}
Assume that $p = \mathrm{char}(\mathbb{K}) > 2$. 
Let $\Lambda$ be a restricted sheaf of differential operators on $X$ over $S$ and let $\mathcal{E}$
be a coherent $\SO_X$-module together with a morphism of $\SO_S$-modules
$\nabla : \Lambda_1 \longrightarrow \End_{\SO_S}(\mathcal{E})$ satisfying for any local sections $f,g 
\in \SO_X$, $s \in \mathcal{E}$ and $D \in \Lambda_1$
$$\nabla_{D}(fs) = f \nabla_D(s) + \delta_{\smb(D)} \quad \text{and} \quad \nabla_g (s) = gs.$$
Let $\mathcal{G}$ be a coherent $\SO_X$-module and let $\psi : \mathcal{E} \to 
\mathcal{E} \otimes F^* \mathcal{G}$ be an $\SO_X$-linear map. Suppose that for $D \in \Lambda_1$ we have 
a commutative diagram 
\begin{eqnarray*}
\xymatrixcolsep{5pc} 
\xymatrix{
\mathcal{E} \ar[r]^-{\psi} \ar[d]_{\nabla_{D}} & \mathcal{E} \otimes F^*\mathcal{G} 
\ar[d]^{\tilde{\nabla}_{D}}\\
\mathcal{E} \ar[r]^-{\psi} & \mathcal{E} \otimes F^* \mathcal{G},
}
\end{eqnarray*}
where the endomorphism $\tilde{\nabla}_{D}$ on the right is defined by
$$\tilde{\nabla}_{D} = \nabla_{D} \otimes \id_{F^* \mathcal{G}} + \id_{\mathcal{E}} 
\otimes \nabla_{\delta \circ \smb(D)}^{\op{can}}.$$
Then over an open dense subset of $X$ we have 
$$\nabla^{\op{can}}_{\delta \circ \smb(D)} (\mathrm{tr}(\psi)) = 0,$$
where $\mathrm{tr}(\psi) \in H^0(X, F^* \mathcal{G})$ denotes the trace of the $\SO_X$-linear 
endomorphism $\psi$.
\end{proposition}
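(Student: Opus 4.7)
The plan is to take the trace of the commutation relation encoded in the diagram and then invoke two classical identities: the fact that taking the trace of the commutator of an endomorphism with a partial connection yields the derivation applied to the trace, and the Leibniz rule for $\nabla^{\op{can}}$.

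First I would reduce the statement to the locally free locus $\Omega \subset X$ of $\mathcal{E}$, which is a dense open since $\mathcal{E}$ is coherent over the integral scheme $X$. Over $\Omega$ we may identify $\SO_X$-linear maps $\mathcal{E} \to \mathcal{E} \otimes F^* \mathcal{G}$ with global sections of $\End(\mathcal{E}) \otimes F^* \mathcal{G}$, and the trace map $\mathrm{tr} \otimes \id : \End(\mathcal{E}) \otimes F^* \mathcal{G} \longrightarrow F^* \mathcal{G}$ is $\SO_X$-linear. The commutative diagram may then be rewritten as the identity
\begin{equation*}
[\nabla_D, \psi] \; = \; -\,(\id_{\mathcal{E}} \otimes \nabla^{\op{can}}_{\delta \circ \smb(D)})(\psi),
\end{equation*}
where by definition $[\nabla_D, \psi] := (\nabla_D \otimes \id_{F^*\mathcal{G}}) \circ \psi - \psi \circ \nabla_D$. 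Writing $\psi = \sum_i \phi_i \otimes g_i$ in a local frame with $\phi_i \in \End(\mathcal{E})$ and $g_i \in F^*\mathcal{G}$, this left-hand side equals $\sum_i [\nabla_D, \phi_i] \otimes g_i$, where $[\nabla_D, \phi_i] = \nabla_D \circ \phi_i - \phi_i \circ \nabla_D$ is now a genuine commutator in $\End_{\SO_S}(\mathcal{E})$ that one checks is $\SO_X$-linear using the Leibniz property of $\nabla_D$.

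Next I would apply $\mathrm{tr} \otimes \id_{F^*\mathcal{G}}$ to both sides. The key computation is the identity
\begin{equation*}
\mathrm{tr}([\nabla_D, \phi]) \; = \; \delta_{\smb(D)}(\mathrm{tr}(\phi)) \qquad \text{for every } \phi \in \End(\mathcal{E}),
\end{equation*}
which I would verify by expressing $\nabla_D$ in a local trivialization of $\mathcal{E}$ as $\delta_{\smb(D)} \cdot \id + N$ for some connection matrix $N$, so that $[\nabla_D, \phi] = \delta_{\smb(D)}(M) + [N, M]$ where $M$ is the matrix of $\phi$, and then using that the trace of a commutator of matrices vanishes. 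Applying this componentwise to the trace of the identity above yields
\begin{equation*}
\sum_i \delta_{\smb(D)}(\mathrm{tr}(\phi_i))\, g_i \; = \; -\sum_i \mathrm{tr}(\phi_i)\, \nabla^{\op{can}}_{\delta \circ \smb(D)}(g_i).
\end{equation*}

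Finally, I would combine the two terms using the Leibniz rule of the canonical connection:
\begin{equation*}
\nabla^{\op{can}}_{\delta \circ \smb(D)}(\mathrm{tr}(\psi)) \; = \; \sum_i \delta_{\smb(D)}(\mathrm{tr}(\phi_i))\, g_i + \sum_i \mathrm{tr}(\phi_i)\, \nabla^{\op{can}}_{\delta \circ \smb(D)}(g_i) \; = \; 0
\end{equation*}
on $\Omega$, which is the claim. The main obstacle is not depth but bookkeeping: one must track carefully which maps are $\SO_X$-linear and which are merely $\SO_S$-linear, since $\nabla_D$ and $\nabla^{\op{can}}_{\delta \circ \smb(D)}$ are derivations rather than module maps, while $\psi$ and $\mathrm{tr} \otimes \id$ are $\SO_X$-linear. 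Once that bookkeeping is in place, the argument is a two-line trace computation that mirrors the classical verification of the flatness of the $p$-curvature in the case $\Lambda = \SD_X$ recorded in \cite{Katz70} and used in \cite{EG}.
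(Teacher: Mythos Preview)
Your argument is correct and follows the same overall strategy as the paper: restrict to the locally free locus, write the commutation relation in local coordinates, and take traces. The paper chooses a local basis of $\mathcal{G}$ (hence a flat frame of $F^*\mathcal{G}$), writes $\nabla_D=\partial+A$ and $\psi$ as a tuple of matrices $B_i$, and reduces the commutation to the matrix identity $[B_i,A]=2\,\partial.B_i$, whence the trace identity after dividing by~$2$. Your organisation via the Leibniz rule for $\tilde{\nabla}_D$ together with the identity $\tr([\nabla_D,\phi])=\delta_{\smb(D)}(\tr\phi)$ is a slightly more coordinate-free packaging of the same computation; in particular it bypasses the factor of~$2$ that appears in the paper's bookkeeping and hence does not actually use the hypothesis $p>2$. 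One small point: your reduction should also pass to the open set where $\mathcal{G}$ (or $F^*\mathcal{G}$) is locally free, as the paper does, so that the decomposition $\psi=\sum_i\phi_i\otimes g_i$ and the trace map $\tr\otimes\id$ are unambiguously defined; alternatively one checks, as you implicitly do, that the final identity is independent of the chosen tensor decomposition.
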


\begin{proof}
Since $X$ is integral, we can restrict attention to the open dense subset $\Omega \subset X$ where both
$\mathcal{E}$ and $\mathcal{G}$ are locally free. Moreover, it will be enough to check the equality 
locally. For $x \in \Omega$ we denote by $\mathcal{O}$ the local ring of $\mathcal{O}_X$ at the point $x$.
Then we can write
$$ \nabla_D = \partial + A, $$
where $\partial =  \delta \circ \smb(D)$ is a derivation on $\mathcal{O}$ and $A$ is a 
$r \times r$ matrix with values in $\mathcal{O}$ and $r = \mathrm{rk}(\mathcal{E})$. Similarly, let $n
= \mathrm{rk}(\mathcal{G})$ and choosing an $\mathcal{O}$-basis of $\mathcal{G}_x$ 
then $\psi$ corresponds to $n$ $r \times r$ matrices
$B_1, \dots, B_n$ with values in $\mathcal{O}$. Then the commutation relations translate into
the following $n$ equalities for $i = 1, \ldots, n$ in 
$\End(\mathcal{O}^{\oplus r})$
\begin{eqnarray*}
B_i (\partial + A) & = & ((\partial + A) \otimes \id + \id \otimes \partial) B_i \\
 \Longleftrightarrow  \ \  B_i \partial + B_i A & = & \partial B_i + AB_i + \partial. B_i \\
 \Longleftrightarrow  \ \  [B_i, A] & = & [\partial, B_i] + \partial. B_i = 2 \partial. B_i,
\end{eqnarray*}
where $\partial. B_i$ denotes the matrix obtained from $B_i$ by applying the derivation 
$\partial$ to all of its coefficients. We also have used the well-known identity 
$[\partial, B_i] = \partial. B_i$. Taking the trace, we obtain
$$ 0 = \mathrm{tr}([B_i, A]) = 2  \mathrm{tr} ( \partial. B_i ) = 2 \partial ( \mathrm{tr}(B_i)). $$
Hence, since $p>2$, we obtain 
$$ \partial ( \mathrm{tr}(B_i)) = 0 \quad  \text{for} \  i = 1, \ldots, n. $$
This shows the result.
\end{proof}

\begin{proposition} \label{pcurvcommutes}
Let $(E,\nabla)$ be a restricted $\Lambda$-module of rank $r$ and let $\psi_\nabla : E \to 
E \otimes F^* H^\vee$ denote its $p$-curvature. Then for $i=1,\ldots, r$ the $\SO_X$-linear 
composite map
$$ \psi_i : \Lambda^i E \stackrel{\Lambda^i \psi_\nabla}{\longrightarrow}  
\Lambda^i ( E \otimes F^* H^\vee) \stackrel{pr}{\longrightarrow}   \Lambda^i E \otimes  F^* \Sym^i H^\vee $$
of $\Lambda^i \psi_\nabla$ with the natural projection map $\mathrm{pr}$ satisfies the 
commutation relations of Proposition \ref{pcurvisflat} with 
$\mathcal{E} = \Lambda^i E$, $\mathcal{G} = \Sym^i H^\vee$, $\psi = \psi_i$ and the natural actions of 
$\Lambda_1$ on $\mathcal{E}$ and $\mathcal{G}$ induced by $\nabla$.
\end{proposition}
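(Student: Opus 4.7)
The plan is to exhibit $\psi_i$ as a composition of two natural maps, each of which intertwines the appropriate action of $\Lambda_1$, and then combine them.

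First, I would equip $\Lambda^i E$ with the action of $\Lambda_1$ extending $\nabla$ via the Leibniz rule:
$$\nabla_D(v_1 \wedge \cdots \wedge v_i) = \sum_{k=1}^i v_1 \wedge \cdots \wedge \nabla_D(v_k) \wedge \cdots \wedge v_i,$$
and similarly equip $\Lambda^i(E \otimes F^*H^\vee)$ with the Leibniz extension of $\tilde{\nabla}_D$ from Lemma \ref{lemma:innerCommutation}, and $F^*\Sym^i H^\vee$ with the Leibniz extension of $\nabla^{\op{can}}_{\delta \circ \smb(D)}$. A short calculation shows that since $\nabla_D(fv) = f\nabla_D(v) + \delta_{\smb(D)}(f)v$ on $E$, the sole surviving $\delta$-term after expansion shows that the same first-order Leibniz rule holds on $\Lambda^i E$, so the hypotheses of Proposition \ref{pcurvisflat} with $\mathcal{E} = \Lambda^i E$ and $\mathcal{G} = \Sym^i H^\vee$ are satisfied.

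Second, I would verify the commutation for each of the two factors of $\psi_i = \mathrm{pr} \circ \Lambda^i \psi_\nabla$. For $\Lambda^i \psi_\nabla : \Lambda^i E \to \Lambda^i(E \otimes F^*H^\vee)$, applying Corollary \ref{cor:innerFlatness} term-by-term in the Leibniz expansion of $\tilde{\nabla}_D \circ \Lambda^i\psi_\nabla$ gives $\tilde{\nabla}_D(\psi_\nabla(v_1) \wedge \cdots \wedge \psi_\nabla(v_i)) = \sum_k \psi_\nabla(v_1) \wedge \cdots \wedge \psi_\nabla(\nabla_D v_k) \wedge \cdots = \Lambda^i\psi_\nabla(\nabla_D(v_1 \wedge \cdots \wedge v_i))$. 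For the natural projection $\mathrm{pr}$, I would compute on a pure tensor $(v_1 \otimes h_1) \wedge \cdots \wedge (v_i \otimes h_i)$ and observe that under $\mathrm{pr}$, the Leibniz expansion of $\tilde{\nabla}_D$ on the wedge side splits into a part that recombines into $\nabla_D(v_1 \wedge \cdots \wedge v_i) \otimes (h_1 \cdots h_i)$ and a part that recombines into $(v_1 \wedge \cdots \wedge v_i) \otimes \nabla^{\op{can}}_{\delta \circ \smb(D)}(h_1 \cdots h_i)$, which is exactly the $\tilde{\nabla}_D$-action on the target.

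Composing the two intertwining relations yields $\tilde{\nabla}_D \circ \psi_i = \psi_i \circ \nabla_D$, which is the required commutation relation. The main care is in handling the two different Leibniz extensions simultaneously and making sure the projection $\mathrm{pr}$ from $\Lambda^i$ to the $\mathrm{Sym}^i$ component is compatible with both the derivation-type action of $\nabla^{\op{can}}$ on symbols and the $\Lambda_1$-action on the underlying sheaf; once this compatibility is written down carefully, all three intertwining statements become elementary verifications on pure tensors, with no further input beyond Corollary \ref{cor:innerFlatness}.
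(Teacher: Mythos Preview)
Your proposal is correct and follows essentially the same approach as the paper: equip $\Lambda^i E$ with the Leibniz extension of $\nabla$, factor $\psi_i$ as $\mathrm{pr}\circ\Lambda^i\psi_\nabla$, and check the commutation for each factor separately. The paper's proof is terser (it simply asserts that ``both checks follow immediately from the definitions of the maps''), whereas you spell out the role of Corollary~\ref{cor:innerFlatness} in the first factor and the pure-tensor computation for $\mathrm{pr}$, but the structure is identical.
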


\begin{proof}
We observe that if $(E, \nabla)$ is a $\Lambda$-module, the exterior power $\Lambda^i E$ need not
necessarily be a $\Lambda$-module, but $\Lambda^i E$ can be equipped by an action of $\Lambda_1$
satisfying the properties given in Proposition \ref{pcurvisflat}. Since $\psi_i$ is a composite map, it
will be enough to check that the two maps $\Lambda^i \psi_\nabla$ and $\mathrm{pr}$ satisfy the
commutation relations. Both checks follow immediately from the definitions of the maps.
\end{proof}

\bigskip
We can now state our main result.
\bigskip

\begin{theorem}
\label{thm:HitchinDescent}
Assume that $p = \mathrm{char}(\mathbb{K}) > 2$. 
Let $\Lambda$ be a restricted sheaf of rings of differential operators over $X$.
We assume that $H = \Lambda_1/\Lambda_0$ is locally free and that the anchor map $\delta : H  \rightarrow T_X$ is generically surjective. 
Then the coefficients $\tr(\psi_i)$ of the characteristic polynomial of 
the $p$-curvature $\psi_\nabla$ of a $\Lambda$-module $(E,\nabla)$ over $X$ are $p$-th powers, i.e. descend under the
Frobenius map $F$ of $X$. This implies that the above defined Hitchin map $h_\Lambda$ \eqref{eq:hitchinMapLambda} factorizes
as follows
\begin{eqnarray*}
\xymatrix{
\SM^\Lambda_X(r,P) \ar[r]^{h_\Lambda} \ar[dr]^{h'_\Lambda} & \mathcal{A}_r(X, F^* H^\vee) \\
   & \mathcal{A}_r(X, H^\vee) \ar@{^(->}[u]^{F},
}
\end{eqnarray*}
where the vertical map is the pull-back map of global sections under the Frobenius map $F$ of $X$.
\end{theorem}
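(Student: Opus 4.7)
The plan is to assemble the three preparatory results (Lemma \ref{Frobdescentopen}, Proposition \ref{pcurvisflat} and Proposition \ref{pcurvcommutes}) so as to show that each coefficient $\tr(\psi_i) \in H^0(X, F^* \Sym^i H^\vee)$ is flat for the canonical Frobenius connection on a dense open subset, hence descends under $F$. Once descent is established for each $i = 1, \ldots, r$, the factorization of $h_\Lambda$ through $h'_\Lambda$ is immediate because the coefficients of the characteristic polynomial of $\psi_\nabla$ are the $\tr(\psi_i)$ up to sign.

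Fix $i \in \{1, \ldots, r\}$. By Proposition \ref{pcurvcommutes}, the composite map $\psi_i : \Lambda^i E \to \Lambda^i E \otimes F^* \Sym^i H^\vee$ satisfies the commutation relations required as input for Proposition \ref{pcurvisflat}, with $\mathcal{E} = \Lambda^i E$, $\mathcal{G} = \Sym^i H^\vee$ and the induced action of $\Lambda_1$. Applying Proposition \ref{pcurvisflat} (here we use $p > 2$) gives an open dense subset $U \subset X$ such that
$$\nabla^{\op{can}}_{\delta \circ \smb(D)}(\tr(\psi_i)) = 0 \quad \text{on } U,$$
for every local section $D \in \Lambda_1$.

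Next I would exploit the generic surjectivity hypothesis. Let $\Omega \subset X$ be the intersection of $U$ with the open dense subset where $H$ and $T_X$ are locally free and where the anchor $\delta : H \to T_X$ is surjective; $\Omega$ is still open and dense since $X$ is integral. Over $\Omega$, every local vector field $\partial$ on $X$ can be written as $\delta(h)$ for some local section $h$ of $H$, and by surjectivity of the symbol map $\smb : \Lambda_1 \to H$ there exists $D \in \Lambda_1$ with $\smb(D) = h$, so $\delta \circ \smb(D) = \partial$. Consequently
$$\nabla^{\op{can}}_{\partial}\bigl(\tr(\psi_i)_{|\Omega}\bigr) = 0$$
for every vector field $\partial$ on $\Omega$.

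Finally, since $H$ is locally free and $X$ is integral, the sheaf $\Sym^i H^\vee$ is torsion-free, so Lemma \ref{Frobdescentopen} applies to the global section $\tr(\psi_i) \in H^0(X, F^* \Sym^i H^\vee)$: we obtain $a_i \in H^0(X, \Sym^i H^\vee)$ with $\tr(\psi_i) = F^*(a_i)$. Running this for all $i$ shows that the image of $h_\Lambda$ lies in the Frobenius pull-back of $\mathcal{A}_r(X, H^\vee)$, yielding the desired factorization $h_\Lambda = F \circ h'_\Lambda$. The main conceptual obstacle — translating the intertwining of $\psi_\nabla$ with $\nabla$ into a flatness statement for the trace — has already been absorbed into Proposition \ref{pcurvisflat}, so the step above that I expect to require the most care is merely organizing the local reduction on $\Omega$ and checking that the various open dense subsets intersect in an open dense subset, which is automatic by integrality of $X$.
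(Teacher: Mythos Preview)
Your proposal is correct and follows essentially the same approach as the paper's own proof: invoke Proposition \ref{pcurvcommutes} to feed $\psi_i$ into Proposition \ref{pcurvisflat}, intersect the resulting dense open with the locus where $\delta$ is surjective to get flatness for the canonical connection with respect to all vector fields, and then apply Lemma \ref{Frobdescentopen}. Your write-up is in fact slightly more explicit than the paper's (you spell out the lifting of $\partial$ through $\delta\circ\smb$ and note that $\Sym^i H^\vee$ is torsion-free), but the argument is the same.
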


\begin{proof}
Let $(E, \nabla)$ be a restricted $\Lambda$-module of rank $r$ with $p$-curvature $\psi_\nabla$.
Proposition \ref{pcurvcommutes} shows that the global section $\psi_i : \Lambda^i E \rightarrow \Lambda^i E \otimes F^*\Sym^i H^\vee$
obtained by projecting $\Lambda^i \psi_\nabla$ satisfies the commutation relations of Proposition \ref{pcurvisflat}. Therefore,
applying Proposition \ref{pcurvisflat},  we can conclude that for any local section $D \in \Lambda_1$
$$ \nabla^{\op{can}}_{\delta \circ \smb(D)} (\mathrm{tr}(\psi_i)) = 0 $$
over an open subset $\Omega_1$ of $X$. Let $\Omega_2$ be an open subset where the anchor map $\delta$ is surjective. Then
over $\Omega_1 \cap \Omega_2$ we have $\nabla^{\op{can}}_{\partial} (\mathrm{tr}(\psi_i)) = 0$ for any local vector field
$\partial$. Now we can apply Lemma \ref{Frobdescentopen}, since $X$ is integral and $H$ locally free.
\end{proof}

\begin{remark}
The following example shows that the assumption that $\delta: H \rightarrow T_X$ is generically surjective cannot be dropped
in Theorem \ref{thm:HitchinDescent}.
Let $X$ be a smooth projective curve of genus $g \geq 2$ over $S = \mathrm{Spec}(\mathbb{K})$ and 
let $T_X$ (resp. $K_X$)  be its tangent (resp. canonical) line bundle. We choose a non-zero global section 
$\varphi \in H^0(X, K_X^{p-1})$ with reduced zero divisor.
We consider as explained in Subsection \ref{pstrsymmalg} the symmetric 
algebra $\Lambda = \mathrm{Sym}^\bullet (T_X)$ with the $p$-structure given by the $\SO_X$-linear map
$\alpha : F^* T_X = T_X^{\otimes p} \rightarrow T_X$ corresponding to the multiplication with $\varphi$.
Note that in this case $\delta = 0$. Then a $\Lambda$-module $(E, \nabla)$ over $X$ corresponds to a vector bundle $E$ together with a Higgs field, i.e., an
$\SO_X$-linear map $\nabla : E \rightarrow E \otimes K_X$. The $p$-curvature $\psi_{\nabla}$ of $(E, \nabla)$
then correponds to the $\SO_X$-linear map $E \rightarrow E \otimes F^* K_X$
$$ \psi_{\nabla} = \nabla^p - \alpha. \nabla,$$
where $\alpha . \nabla$ denotes the composite map $(\mathrm{id}_{E} \otimes \alpha^\vee) \circ \nabla$. Then clearly
$\mathrm{tr}(\psi_\nabla)$ does not descend under the Frobenius map.
\end{remark}

\begin{remark}
The previous remark shows that asking for a generally surjective anchor $\delta:H\longrightarrow T_X$ is indeed necessary for the Theorem, but it can be proven that, in some scenarios, this condition is generically satisfied. For instance, if $X$ is a smooth curve, then any nonzero map $\delta:H\longrightarrow T_X$ is generically surjective. As a consequence, for smooth curves, any restricted sheaf of rings of differential operators $\Lambda$ on $X$ with nonzero anchor satisfies Theorem \ref{thm:HitchinDescent}. In particular, this holds when $\Lambda$ is the universal enveloping algebra of any restricted Lie algebroid $(H,[-.-],\delta,[p])$ with $\delta \ne 0$ or, more generally, for any $\Lambda$ in which the left and right $\SO_X$-module structures are different (see Remark \ref{rmk:delta0}).
\end{remark}

\section{Hitchin map for restricted $\Lambda^R$-modules}

The argument used to show Theorem \ref{thm:HitchinDescent} can be adapted to the following particular relative case: consider
for an integral projective scheme $X$ over $S = \mathrm{Spec}(\mathbb{K})$ the restricted sheaf of rings of differential 
operators $\Lambda^R$ over $X \times \mathbb{A}^1$ relative to $\mathbb{A}^1$ obtained via the Rees construction from
the universal enveloping algebra $\Lambda = \Lambda_H$ of a restricted Lie algebroid $(H, [-,-], \delta, [p])$ 
over $X$ --- see Subsection \ref{redgrad}.

\bigskip

We consider the moduli space $\mathcal{M}^{\Lambda^R}_{X \times \mathbb{A}^1 / \mathbb{A}^1}(r,P)$
parameterizing Giesecker semi-stable $\Lambda^R$-modules over $X \times \mathbb{A}^1 / \mathbb{A}^1$ of rank $r$
and with Hilbert polynomial $P$. Since $\Lambda_1^R /\Lambda_0^R = p_X^*(H)$ the Hitchin map $h_{\Lambda^R}$ in the relative case 
(see \cite[Sections 3.5 and 4.5]{Langer}) corresponds to a morphism
\begin{eqnarray} \label{HitchinmapR}
\xymatrix{
\mathcal{M}^{\Lambda^R}_{X \times \mathbb{A}^1 / \mathbb{A}^1}(r,P) \ar[r]^{h_{\Lambda^R}} \ar[dr]  & \mathcal{A}_r(X, F^* H^\vee) \times 
\mathbb{A}^1 \ar[d]^{p_{\mathbb{A}^1}} \\
   & \mathbb{A}^1 
}
\end{eqnarray}
over $\mathbb{A}^1$. Then we obtain the 

\begin{theorem}    \label{HitchindescentRees}
Assume that $p = \mathrm{char}(\mathbb{K}) > 2$. 
Let $\Lambda = \Lambda_H$ be the universal enveloping algebra of a restricted Lie algebroid $(H, [-,-], \delta, [p])$ 
over an integral projective scheme $X$ and let $\Lambda^R$ be  the restricted sheaf of rings of differential 
operators over $X \times \mathbb{A}^1$ relative to $\mathbb{A}^1$ obtained via the Rees construction from $\Lambda_H$.
We assume that $H = \Lambda_1/\Lambda_0$ is locally free and that the anchor map $\delta : H  \rightarrow T_X$ is generically surjective. 
Then the above defined Hitchin map $h_{\Lambda^R}$ \eqref{HitchinmapR} factorizes as follows
\begin{eqnarray*}
\xymatrix{
\mathcal{M}^{\Lambda^R}_{X \times \mathbb{A}^1 / \mathbb{A}^1}(r,P)  \ar[r]^{h_{\Lambda^R}} \ar[dr]^{h'_{\Lambda^R}} & 
\mathcal{A}_r(X, F^* H^\vee) \times \mathbb{A}^1 \\
   & \mathcal{A}_r(X, H^\vee)  \times \mathbb{A}^1, \ar@{^(->}[u]^{F \times \mathrm{id}}
}
\end{eqnarray*}
where the vertical map is the pull-back map of global sections under the Frobenius map $F$ of $X$.

\end{theorem}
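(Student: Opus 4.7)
The plan is to repeat the proof of Theorem \ref{thm:HitchinDescent} in the relative setting over $\mathbb{A}^1$, using the explicit description of $\Lambda^R$ recalled in Subsection \ref{redgrad}: its first graded piece is $\Lambda^R_1/\Lambda^R_0 = p_X^*(H)$ and its anchor map is $\delta^R = t \delta$, where $t$ is the coordinate on $\mathbb{A}^1$. Given a Giesecker semi-stable $\Lambda^R$-module $(E,\nabla)$ of rank $r$ with Hilbert polynomial $P$, the relative analog of Proposition \ref{prop:p-curvature} produces a $p$-curvature $\psi_\nabla$, and after taking exterior powers and projecting as in Proposition \ref{pcurvcommutes} one obtains $\SO_{X \times \mathbb{A}^1}$-linear morphisms
$$\psi_i : \Lambda^i E \longrightarrow \Lambda^i E \otimes (F \times \mathrm{id})^* p_X^* \Sym^i H^\vee, \qquad i = 1, \ldots, r,$$
whose traces $\mathrm{tr}(\psi_i)$ are the components of $h_{\Lambda^R}(E,\nabla)$.

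Next, I would verify that Corollary \ref{cor:innerFlatness} and Propositions \ref{pcurvisflat} and \ref{pcurvcommutes} extend verbatim to the Rees algebra $\Lambda^R$ over $X \times \mathbb{A}^1$ relative to $\mathbb{A}^1$. Their proofs are local and rely only on the restricted Lie algebroid structure carried by $\Lambda^R_1$, together with the centrality statement of Proposition \ref{cor:iotaCenter}; both apply to $\Lambda^R$ as established in Subsection \ref{redgrad}. This yields, on a dense open subset $\Omega_1 \subset X \times \mathbb{A}^1$, the identity
$$\nabla^{\mathrm{can}}_{\delta^R \circ \smb(D)} (\mathrm{tr}(\psi_i)) = 0 \qquad \text{for every local section } D \in \Lambda^R_1.$$

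Since $\delta$ is generically surjective on $X$, the anchor $\delta^R = t \delta$ is generically surjective on $X \times \mathbb{A}^1$: namely on the dense open subset $\Omega_2 \times \{t \neq 0\}$, where $\Omega_2 \subset X$ is the locus where $\delta$ is surjective. Hence on $\Omega_1 \cap (\Omega_2 \times \{t \neq 0\})$ the section $\mathrm{tr}(\psi_i)$ is annihilated by the canonical connection along every local relative vector field in $p_X^* T_X$. Applying the relative analog of Lemma \ref{Frobdescentopen} along the Frobenius $F \times \mathrm{id}_{\mathbb{A}^1}$ of $X \times \mathbb{A}^1$ over $\mathbb{A}^1$, which is valid because $X \times \mathbb{A}^1$ is integral and $p_X^* \Sym^i H^\vee$ is locally free and hence torsion-free, we conclude that $\mathrm{tr}(\psi_i)$ is globally of the form $(F \times \mathrm{id})^* \sigma_i$ for some $\sigma_i \in H^0(X \times \mathbb{A}^1, p_X^* \Sym^i H^\vee)$, yielding the claimed factorization. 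The main subtlety is extending Cartier descent across the fibre $\{t=0\}$, where the relative anchor $\delta^R$ vanishes identically; however, this causes no real obstruction, because the descent is already established on the dense open set $\{t \neq 0\}$ and then propagated by torsion-freeness, exactly as in the proof of Lemma \ref{Frobdescentopen}.
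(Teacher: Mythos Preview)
Your proposal is correct and follows essentially the same approach as the paper's proof: observe that $\delta^R = t\delta$ is generically surjective on $X \times \mathbb{A}^1$ because $\delta$ is generically surjective on $X$, and then rerun the argument of Theorem \ref{thm:HitchinDescent} for relative vector fields in $p_X^*(T_X)$. You have in fact spelled out more than the paper does, including the handling of the locus $\{t=0\}$ via torsion-freeness in the descent lemma, which the paper leaves implicit.
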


\begin{proof}
Since the anchor $\delta : H \rightarrow T_X$ is generically surjective over $X$, the anchor $\delta^R = t \delta : p_X^* (H) \rightarrow p_X^*(T_X)$
is also generically surjective over $X \times \mathbb{A}^1$. Hence we can apply the same arguments as in the proof of Theorem \ref{thm:HitchinDescent}
for local relative vector fields $\partial \in T_{X \times \mathbb{A}^1/ \mathbb{A}^1} = p_X^*(T_X)$.
\end{proof}

%\nocite{*}
\bibliographystyle{alpha}
\bibliography{Biblio}

\end{document}